\definecolor{dblue}{RGB}{0,54,135}
\definecolor{lorange}{RGB}{255,155,0}
\newcommand{\cp}{\mathrm{cp}}
\newcommand{\CP}{\mathcal{CP}}
\newcommand{\Z}{\mathbb{Z}}
\newcommand{\dcom}[1]{{\color{green}{DE: #1}} }
\theoremstyle{definition}
\newtheorem*{remark}{Remark}
\newtheorem{theorem}{Theorem}
\newtheorem{cor}[theorem]{Corollary}
\newtheorem{lemma}[theorem]{Lemma}
\newtheorem*{defn}{Definition}
\newtheorem{example}[theorem]{Example}
\newtheorem{conj}[theorem]{Conjecture}
\numberwithin{theorem}{section}
\newtheorem{quest}[theorem]{Question}
\title{On the parity of the number of $(a,b,m)$-copartitions of $n$}
\author{Hannah E. Burson and Dennis Eichhorn}
\date{\today}
\begin{document}
\maketitle

\begin{abstract}
    We continue the study of the $(a,b,m)$-copartition function $\cp_{a,b,m}(n)$, 
    which arose as a combinatorial generalization of Andrews' partitions with even parts below odd parts.
 The generating function of $\cp_{a,b,m}(n)$ has a nice representation as an infinite product. 
In this paper, we focus on the parity of $\cp_{a,b,m}(n)$. 
As with the ordinary partition function, it is difficult to show positive density of either even or odd values of $\cp_{a,b,m}(n)$
for arbitrary $a, b$, and $m$. 
However, we find specific cases of $a,b,m$ such that $\cp_{a,b,m}(n)$ is even with density 1. Additionally, we show that the sequence $\{\cp_{a,m-a,m}(n)\}_{n=0}^\infty$ takes both even and odd values infinitely often.
\end{abstract}
\section{Introduction}
In \cite{BursonEichhorn}, the authors introduce and develop the theory of copartitions, which have connections to mock theta functions, Rogers-Ramanujan partitions, the capsids of Garvan and Schlosser \cite{Capsids}, and many other classical partition-theoretic objects.
Each $(a,b,m)$-copartition is comprised of three partitions: 
a partition into parts $\equiv  a \pmod m$,
a partition into parts $\equiv b \pmod m$, and a rectangular partition that unites them. Additionally, the $(a,b,m)$-copartition generating function can be written as an infinite product, further motivating interest in the copartition counting functions.
As with any type of partitions, one of the first questions one may ask is about how frequently these counting functions take even and odd values. 

For example, Kolberg showed that $p(n)$, the ordinary partition function, takes both even and odd values infinitely often \cite{Kolberg}.
Parkin and Shanks studied the parity of $p(n)$ computationally, and their evidence strongly suggests that $p(n)$ is even about half of the time \cite{ParkinShanks}.
Sadly, the best known results on the parity of $p(n)$ are spectacularly far from proving anything of the sort \cite{BellaicheGreenSound}. In fact,
it is still an open problem to even show that $p(n)$ is even or odd with positive density.

In this paper, we study the parity of the copartition counting functions.
Although we conjecture that some copartition functions are equally often even and odd, we show that this is not the case for all such functions. 
Furthermore, in some special cases, we are able to demonstrate explicit sets with positive density on which $\cp_{a,b,m}(n)$, the number of $(a,b,m)$-copartitions of $n$, is always even.
These explicit sets allow us to give infinitely many arithmetic progressions on which certain copartition functions are always even.
For example, we show that for $r=3, 17, 24, 31, 38, 45$, we have 
$$ \cp_{3,1,4}(49k+r) \equiv 0 \pmod 2,
$$
and
for $s=9, 14, 19, 24$, we have
$$ \cp_{5,1,6}(25k+s) \equiv 0 \pmod 2
$$
for every nonnegative integer $k$.

In Section 2, we recall the definition and generating function for copartitions.
In Section 3, we recall conjugation, discuss self-conjugate copartitions, and give lower bounds on the number of even values of $\cp_{a,a,m}(n)$.
In Section 4, we focus on $\cp_{a,m-a,m}(n)$; we show that $\cp_{a,m-a,m}(n)$ takes both odd and even values infinitely often,
give explicit sets with density one on which $\cp_{3,1,4}(n)$ and $\cp_{5,1,6}(n)$ are even, and provide infinitely many congruences modulo two in arithmetic progressions for each of these functions.
Sections 3 and 4 also include several open problems.
In Appendix \ref{sec:datatables},
we provide computational data surrounding our open questions and conjectures.

\section{Background on Copartitions}

In this section, we review $(a,b,m)$-copartitions and the function $\cp_{a,b,m}(n),$ which were first introduced in \cite{BursonEichhorn}. 
\begin{defn} 
An $(a,b,m)$-copartition is a triple of partitions $(\gamma,\rho,\sigma)$, where each of the parts of $\gamma$ is at least $a$ and congruent to $a\pmod m$, each of the parts of $\sigma$ is at least $b$ and congruent to $b\pmod m$, and $\rho$ has the same number of parts as $\sigma$, each of which have size equal to $m$ times the number of parts of $\gamma$.\\
 When $a,b,m\ge 1$, we let $\cp_{a,b,m}(n)$ denote the number of $(a,b,m)$-copartitions of size $n$. 
\end{defn}

Although $\rho$ is completely determined by $\gamma$ and $\sigma$, the graphical representation we use suggests that the natural way to write the triple is $(\gamma,\rho,\sigma)$.  
We refer to $\gamma$ as the \emph{ground} of a copartition and $\sigma$ as the \emph{sky}.

\begin{example}

The $(2,1,3)$-copartitions of size 9 are
   $$ \left (\{5,2^2\},\emptyset,\emptyset \right ),\;
   \left (\{5\},\{3\},\{1\} \right ),\;
     \left (\{2\},\{3\},\{4\} \right ),\;
    \left  (\emptyset,\emptyset, \{7+1^{2}\} \right ),\;
   $$ 
   $$
   \left (\emptyset,\emptyset, \{4^2+1\} \right ),
   \left (\emptyset,\emptyset, \{4+1^5\} \right ),
   \text{ and }  \left (\emptyset,\emptyset, \{1^9\} \right ).
$$
Thus, $\cp_{2,1,3}(9)=7$. 

\end{example}
  
To represent the $(a,b,m)$ copartition $(\sigma, \rho, \gamma)$ graphically, we append the $m$-modular diagram for $\sigma$ to the right of the $m$-modular diagram for $\rho$. Then, we append the conjugate of the $m$-modular diagram for $\gamma$ below $\rho$. 

\begin{example} The following diagram represents the $(a,b,m)$-copartition $(\{3m+a,2m+a,2m+a,a\},\{4m,4m\},\{3m+b,2m+b\})$.

\begin{center}
\begin{minipage}{0.35\textwidth}
\begin{tikzpicture}[remember picture]

\node (n) {\begin{varwidth}{5cm}{
\begin{ytableau}
m&m&m&m&b&m&m&m\\
m&m&m&m&b&m&m\\
a&a&a&a\\
m&m&m\\
m&m&m\\
m
\end{ytableau}} \end{varwidth}};
\draw[very thick, black] ([xshift=0.4em, yshift=+1.5em] n.west)--([xshift=6.5em,yshift=1.5em]n.west)--([xshift=0em,yshift=-.4em] n.north);

\end{tikzpicture}
\end{minipage}
\end{center}
\end{example}

In \cite{BursonEichhorn}, we also show that
\begin{align}\label{eq:cpgf}
   {\mathbf{cp}}_{a,b,m}(q)&:= \sum_{n=0}^\infty \cp_{a,b,m}(n)q^n
   =\frac{(q^{a+b};q^m)_\infty}{(q^b;q^m)_\infty(q^a;q^m)_\infty}.
\end{align}

\section{Conjugation, Self-Conjugate Copartitions, and the Parity of $\cp_{a,a,m}(n)$}

In this section, we explore the parity of $\cp_{a,a,m}(n)$. Many of our results follow from considering the fixed points of the conjugation involution on $\cp_{a,a,m}(n)$. 
As in \cite{BursonEichhorn}, we define the conjugate of a copartition $(\gamma,\rho,\sigma)$ as the copartition obtained by reflecting the graphical representation about the line $y=-x$. 
Defining $\nu(\lambda)$ to be the number of parts of a partition $\lambda$, 
the conjugate copartition is precisely $(\sigma,\rho',\gamma),$ where $\rho'$ consists of exactly $\nu(\gamma)$ parts of size $m \times \nu(\sigma)=m \times \nu(\rho)$. Equivalently, $\rho'$ is the partition obtained by conjugating the $m$-modular diagram representing $\rho$.

\begin{figure}[htb]
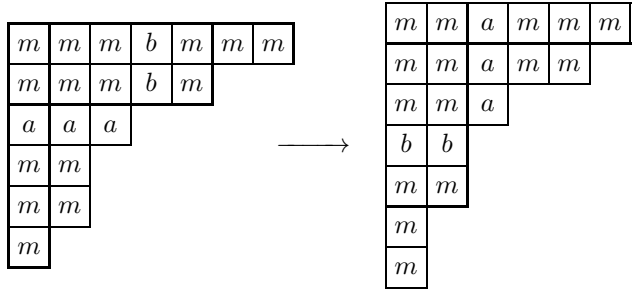

\centering
\begin{minipage}{10em}
      \begin{ytableau}
        m&m&m&b&m&m&m\\
        m&m&m&b&m\\
        a&a&a\\
        m&m\\
        m&m\\
        m
        \end{ytableau}  
\end{minipage}
   $\hspace{-2pt}\xrightarrow{\hspace{0.3in}
   }\quad$
   \begin{minipage}{10em}
        \begin{ytableau}
        m&m&a&m&m&m\\
        m&m&a&m&m\\
        m&m&a\\
        b&b\\
        m&m\\
        m\\
        m
        \end{ytableau}
        \end{minipage}
    \caption{Conjugation of an $(a,b,m)$-copartition.}
    \label{fig:my_label}
\end{figure}

\begin{remark}
Conjugation is a size-preserving bijection from $\CP_{a,b,m}$ to $\CP_{b,a,m}$.
\end{remark}

\begin{remark}
When $a\ne b$, there are no self-conjugate $(a,b,m)$-copartitions.
\end{remark}
\begin{theorem}\label{thm:selfConj}
Let $\mathrm{scp}_{a,m}(n)$ denote the number of self-conjugate $(a,a,m)$-copartitions of size $n$. Then, we have the following generating function:
\begin{equation*}
    \sum_{n=0}^\infty \mathrm{scp}_{a,m}(n)=(-q^{m+2a};q^{2m})_\infty.
\end{equation*}
\end{theorem}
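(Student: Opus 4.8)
The plan is to identify the self-conjugate copartitions explicitly, write their generating function as a sum over the number of parts of the ground, and then collapse that sum to the stated product via a classical Euler/Cauchy-type identity.

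First I would locate the fixed points of conjugation. Since the conjugate of $(\gamma,\rho,\sigma)$ is $(\sigma,\rho',\gamma)$, a copartition is self-conjugate exactly when $\gamma=\sigma$. In that case $\nu(\gamma)=\nu(\sigma)=:k$, so $\rho$ has $k$ parts each of size $m\nu(\gamma)=mk$ and $\rho'$ has $k$ parts each of size $m\nu(\sigma)=mk$; hence $\rho=\rho'$ automatically, imposing no further constraint. Thus a self-conjugate $(a,a,m)$-copartition is completely determined by its ground $\gamma$, a partition into parts $\ge a$ that are $\equiv a \pmod m$. If $\gamma$ has exactly $k$ parts, I write them as $a+mc_1\ge\cdots\ge a+mc_k$ with $c_1\ge\cdots\ge c_k\ge 0$, so that $|\gamma|=ka+m(c_1+\cdots+c_k)$. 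The rectangle $\rho$ contributes $mk^2$ and the sky $\sigma=\gamma$ contributes another $|\gamma|$, so the total size is $n=2ka+mk^2+2m(c_1+\cdots+c_k)$. Summing $q^n$ over all $k\ge 0$ and all $(c_1,\dots,c_k)$, where the latter range over partitions into at most $k$ parts (generated by $1/(q^{2m};q^{2m})_k$ with weight $q^{2m}$ per unit), gives
\[
\sum_{n=0}^\infty \mathrm{scp}_{a,m}(n)q^n=\sum_{k=0}^\infty \frac{q^{mk^2+2ak}}{(q^{2m};q^{2m})_k}.
\]

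Finally I would evaluate this sum in closed form. Applying the identity $\sum_{k\ge 0} z^k q^{k^2}/(q^2;q^2)_k=(-zq;q^2)_\infty$ (Euler's identity $\sum_k z^k q^{\binom k2}/(q;q)_k=(-z;q)_\infty$ with $q\mapsto q^2$ and $z\mapsto zq$) with base $q^m$ and $z=q^{2a}$ produces exactly $(-q^{m+2a};q^{2m})_\infty$. Equivalently, and more in keeping with the combinatorial flavor of the paper, one can argue bijectively: since $mk^2+2ak=\sum_{i=1}^k\big((2i-1)m+2a\big)$ is the sum of the $k$ smallest positive integers $\equiv m+2a\pmod{2m}$, the exponent records a staircase of $k$ distinct such parts, and multiplying by $1/(q^{2m};q^{2m})_k$ adjoins an arbitrary partition into at most $k$ parts that are multiples of $2m$. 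This is precisely the standard decomposition of a partition into $k$ distinct parts $\equiv m+2a\pmod{2m}$, and summing over $k$ recovers $\prod_{j\ge 0}\big(1+q^{(2j+1)m+2a}\big)=(-q^{m+2a};q^{2m})_\infty$.

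I expect the only genuine content to be recognizing the correct specialization of the classical identity (or, equivalently, spotting the staircase decomposition); the reduction to the condition $\gamma=\sigma$ and the bookkeeping of the three part-sizes are routine. The one point worth double-checking carefully is that the middle partition $\rho$ is forced rather than free, so that it contributes the clean quadratic term $mk^2$ and no extra summation.
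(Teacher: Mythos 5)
Your proof is correct, but it reaches the product by a different route than the paper. The paper adapts Sylvester's classical argument directly to the two-dimensional diagram: it peels a self-conjugate $(a,a,m)$-copartition into diagonal hooks, observes that each hook contains exactly two $a$-cells and an odd number of $m$-cells, and reads off a bijection with partitions into distinct parts $\equiv m+2a \pmod{2m}$ in a single step. You instead first reduce to the observation that self-conjugacy is equivalent to $\gamma=\sigma$ (with $\rho=\rho'$ forced), which linearizes the problem to counting a single partition $\gamma$ with parts $\equiv a\pmod m$; you then obtain the $q$-series
$\sum_{k\ge 0} q^{mk^2+2ak}/(q^{2m};q^{2m})_k$
and collapse it with Euler's identity $\sum_k z^kq^{k^2}/(q^2;q^2)_k=(-zq;q^2)_\infty$. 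Both your computation and your alternative staircase bijection are right (and the staircase version is, after unwinding, essentially the hook correspondence in disguise: the $i$-th hook has $2(k-i)+1+2c_i$ cells labelled $m$ and two labelled $a$, matching your staircase part plus a multiple of $2m$). What your route buys is an explicit intermediate Rogers--Ramanujan-type sum for $\mathrm{scp}_{a,m}(n)$ refined by the number of ground parts, which could be useful elsewhere; what the paper's route buys is a one-step, purely pictorial bijection that needs no analytic identity. You were also right to flag the one delicate point, namely that $\rho$ is determined rather than free, so it contributes exactly the quadratic term $mk^2$.
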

\begin{proof}
We prove this theorem combinatorially by adapting Sylvester's proof that self-conjugate partitions of $n$ are equinumerous with partitions of $n$ into distinct odd parts \cite[p.~275]{SylvesterACT}.   Consider the graphical representation of a self-conjugate $(a,a,m)$-copartition. For each cell on the line $y=-x$, there is a corresponding hook that consists of that cell, all the cells directly below it, and all the cells directly to the right of it. Note that, because the diagram is self conjugate, each hook has an odd number of $m$'s and exactly two $a$'s. Thus, if we consider the hooks as the parts of a new partition, this new partition will have distinct parts that are congruent to $m+2a\pmod{2m}$. Similarly, given a partition into distinct parts congruent to $m+2a\pmod{2m}$, we can create a self-conjugate $(a,a,m)$ copartition. 
Since $(-q^{m+2a};q^{2m})_\infty$ generates partitions into distinct parts that are congruent to $m+2a\pmod{2m}$, our result follows.
\end{proof}

Since conjugation is an involution on $(a,a,b)$-copartitions, $\cp_{a,a,m}(n)$ is odd exactly when there are an odd number of self-conjugate $(a,a,m)$-copartitions of size $n$. Thus, by Theorem \ref{thm:selfConj},
\begin{equation}
\sum_{n=0}^\infty \cp_{a,a,m}(n)q^n\equiv (-q^{m+2a};q^{2m})_{\infty}\pmod{2}. \label{eq:parityGF}
\end{equation} This congruence  immediately implies the following facts about the parity of $\cp_{a,a,m}(n)$. 

\begin{cor}
For even $m$, $\cp_{a,a,m}(2n+1)\equiv 0\pmod{2}$. 
\end{cor}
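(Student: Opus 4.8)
The plan is to use the congruence in equation (\ref{eq:parityGF}), which states that $\sum_{n=0}^\infty \cp_{a,a,m}(n)q^n \equiv (-q^{m+2a};q^{2m})_\infty \pmod 2$. The key observation is that this product generates partitions into distinct parts all congruent to $m+2a \pmod{2m}$, so I would examine the exponents of $q$ that can appear with odd coefficient.

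First I would note that when $m$ is even, every part counted by the product is congruent to $m+2a \pmod{2m}$, and since both $m$ and $2a$ are even, each such part is even. Consequently, any partition into such parts has even total size. This means the only exponents $n$ that occur on the right-hand side of (\ref{eq:parityGF}) are even, so the coefficient of $q^{2n+1}$ in $(-q^{m+2a};q^{2m})_\infty$ is $0$ for every $n$. By the congruence, $\cp_{a,a,m}(2n+1) \equiv 0 \pmod 2$.

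The argument is essentially immediate once one invokes (\ref{eq:parityGF}), so there is no serious obstacle; the only point requiring care is the parity bookkeeping. Specifically, I would verify that $m+2a$ is even when $m$ is even (clear, since $2a$ is always even), and that $2m$ is even, so that generating a sum of parts each individually even forces an even total. I expect the main (minor) subtlety to be making explicit that distinctness of the parts plays no role here: regardless of how many parts appear, the size is a sum of even numbers and hence even, so odd sizes contribute nothing.

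Thus the corollary follows directly from the parity generating function together with the observation that all parts generated by $(-q^{m+2a};q^{2m})_\infty$ are even when $m$ is even.
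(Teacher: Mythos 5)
Your proof is correct and matches the paper's intended argument: the paper derives this corollary as an immediate consequence of the congruence \eqref{eq:parityGF}, exactly as you do, by noting that for even $m$ every part generated by $(-q^{m+2a};q^{2m})_\infty$ is even, so no odd exponent can carry an odd coefficient.
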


\begin{cor}\label{cor:densityLowerBound1}
For even $m$,
\begin{equation*}
    \liminf_{n\to \infty}\frac{\#\{1\le k\le n\mid\cp_{a,a,m}(k) \text{ is even}\}}{n}\ge\frac{1}{2}.
\end{equation*}
\end{cor}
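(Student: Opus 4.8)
The plan is to deduce this density statement directly from the previous corollary together with the elementary fact that the odd positive integers have density exactly $\tfrac12$. Since Corollary~3.3 tells us that for even $m$ every value $\cp_{a,a,m}(2n+1)$ is even, the set of even values of $\cp_{a,a,m}$ contains all the odd positive integers, and these alone already account for asymptotically half of all positive integers.

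Concretely, I would argue as follows. Fix even $m$ and let $E(n) = \#\{1 \le k \le n \mid \cp_{a,a,m}(k) \text{ is even}\}$. Because every odd $k$ in the range $1 \le k \le n$ contributes to $E(n)$ by the corollary, I have the lower bound
\begin{equation*}
    E(n) \ge \#\{1 \le k \le n \mid k \text{ is odd}\} = \left\lceil \frac{n}{2} \right\rceil \ge \frac{n}{2}.
\end{equation*}
Dividing by $n$ gives $E(n)/n \ge 1/2$ for every $n$, and taking the $\liminf$ as $n \to \infty$ yields the claimed inequality. No delicate analysis is required; the density-one contribution from the odd indices is simply counted.

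There is essentially no main obstacle here, since the corollary does all the combinatorial work and the remaining content is just counting the odd integers up to $n$. The only point worth stating carefully is that the bound holds for \emph{every} $n$ (so the $\liminf$ is immediate), rather than merely in the limit. It is worth remarking that this lower bound is presumably far from tight: one would expect the even values to have density strictly greater than $\tfrac12$, since the even-indexed terms governed by \eqref{eq:parityGF} should also be even a positive proportion of the time, but capturing that would require a genuine analysis of the lacunary-type series $(-q^{m+2a};q^{2m})_\infty$ modulo $2$ rather than the trivial parity argument used here.
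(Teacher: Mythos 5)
Your argument is correct and is exactly the one the paper intends: the preceding corollary makes $\cp_{a,a,m}(k)$ even for all odd $k$, and counting the odd integers up to $n$ gives $E(n)/n \ge 1/2$ for every $n$, hence for the $\liminf$. The paper simply states this as an immediate consequence of \eqref{eq:parityGF} without writing out the counting, so there is nothing to add.
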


\begin{cor}\label{cor:densityLowerBound2}
For $m\equiv2\pmod{4}$ and odd $a$, 
$$\liminf_{n\to \infty}\frac{\#\{1\le k\le n\mid\cp_{a,a,m}(k) \text{ is even}\}}{n}\ge\frac{3}{4}.$$
\end{cor}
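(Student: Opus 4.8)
The plan is to read the parity information directly off the congruence \eqref{eq:parityGF} and to sharpen the divisibility argument that already underlies Corollary \ref{cor:densityLowerBound1}. Recall that \eqref{eq:parityGF} gives
\[
\sum_{n=0}^\infty \cp_{a,a,m}(n)q^n \equiv (-q^{m+2a};q^{2m})_\infty \pmod 2,
\]
so $\cp_{a,a,m}(n)$ is odd precisely when $n$ admits an odd number of partitions into distinct parts congruent to $m+2a \pmod{2m}$. The first step is therefore to pin down the residues of these parts under the stronger hypotheses.

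The key observation is that when $m \equiv 2 \pmod 4$ and $a$ is odd, every part appearing in this infinite product is divisible by $4$. Indeed, $a$ odd forces $2a \equiv 2 \pmod 4$, and together with $m \equiv 2 \pmod 4$ this gives $m+2a \equiv 0 \pmod 4$; moreover $m \equiv 2 \pmod 4$ makes the modulus satisfy $2m \equiv 0 \pmod 4$, so adding any multiple of $2m$ preserves divisibility by $4$. Hence each part $m+2a+2mj$ is a multiple of $4$. This is exactly the refinement over Corollary \ref{cor:densityLowerBound1}, where the same computation yields only parts divisible by $2$.

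From here the conclusion is immediate: any partition into parts all divisible by $4$ has size divisible by $4$, so the coefficient of $q^n$ in $(-q^{m+2a};q^{2m})_\infty$ vanishes whenever $n \not\equiv 0 \pmod 4$. By \eqref{eq:parityGF}, this means $\cp_{a,a,m}(n) \equiv 0 \pmod 2$ for every $n \not\equiv 0 \pmod 4$. The set $\{\,n : n \not\equiv 0 \pmod 4\,\}$ has natural density $\tfrac34$, and each of its elements contributes an even value, so the liminf of the proportion of even values is at least $\tfrac34$, as claimed.

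I expect no genuine obstacle here: the only point requiring care is the elementary residue computation in the second step, which is precisely where the hypotheses $m \equiv 2 \pmod 4$ and $a$ odd enter. The argument is a strict strengthening of the density-$\tfrac12$ bound, trading ``divisible by $2$'' for ``divisible by $4$''; one could hope to push the method further only by finding hypotheses that force the generating parts into a still sparser residue class, but $m \equiv 2 \pmod 4$ with $a$ odd appears to be the natural endpoint for this particular divisibility phenomenon.
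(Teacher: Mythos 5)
Your argument is correct and is essentially identical to the paper's: both observe that $m\equiv 2\pmod 4$ and $a$ odd force $m+2a\equiv 0\pmod 4$ and $2m\equiv 0\pmod 4$, so every part generated by $(-q^{m+2a};q^{2m})_\infty$ is divisible by $4$, whence $\cp_{a,a,m}(n)$ is even for all $n\not\equiv 0\pmod 4$ by \eqref{eq:parityGF}. The paper states this more tersely (via the sizes of self-conjugate copartitions), but the content is the same.
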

\begin{proof}
Note that, because $m\equiv 2\pmod{4}$ and $a$ is odd, $m+2a\equiv 0\pmod 4$ and $2m\equiv0\pmod{4}$. Thus, all self-conjugate copartitions must be of size $0\pmod 4$. 
\end{proof}

\begin{cor}\label{m=2a}
For $a$ odd,
$$ \lim_{n\to \infty}\frac{\#\{1\le k\le n\mid\cp_{a,a,2a}(k) \text{ is even}\}}{n}=1.$$
Moreover, $\cp_{a,a,2a}(k)$ is odd if and only if
$k$ is $4a$ times a pentagonal number; that is, if and only if  $k=2an(3n-1)$ for some integer $n$. 
\end{cor}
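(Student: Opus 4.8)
The plan is to specialize the parity generating function \eqref{eq:parityGF} to the case $m=2a$ and then reduce the resulting infinite product modulo $2$ using Euler's pentagonal number theorem. Setting $m=2a$ gives $m+2a=4a$ and $2m=4a$, so \eqref{eq:parityGF} becomes
$$\sum_{n=0}^\infty \cp_{a,a,2a}(n)q^n \equiv (-q^{4a};q^{4a})_\infty \pmod 2.$$
It therefore suffices to determine which coefficients of $(-q^{4a};q^{4a})_\infty$ are odd.

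The key observation I would use is that modulo $2$ we have $1+q^j\equiv 1-q^j$, so
$$(-q;q)_\infty = \prod_{j\ge 1}(1+q^j) \equiv \prod_{j\ge 1}(1-q^j) = (q;q)_\infty \pmod 2.$$
Applying Euler's pentagonal number theorem $(q;q)_\infty = \sum_{n=-\infty}^\infty (-1)^n q^{n(3n-1)/2}$ and discarding the signs modulo $2$ then yields
$$(-q;q)_\infty \equiv \sum_{n=-\infty}^\infty q^{n(3n-1)/2} \pmod 2.$$
Since the generalized pentagonal numbers $n(3n-1)/2$ are distinct as $n$ ranges over $\Z$, every coefficient on the right is $0$ or $1$, with value $1$ occurring exactly at the pentagonal-number exponents. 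Substituting $q\mapsto q^{4a}$ gives
$$(-q^{4a};q^{4a})_\infty \equiv \sum_{n=-\infty}^\infty q^{2an(3n-1)} \pmod 2,$$
so $\cp_{a,a,2a}(k)$ is odd precisely when $k=2an(3n-1)=4a\cdot\tfrac{n(3n-1)}{2}$ for some integer $n$, that is, exactly when $k$ is $4a$ times a (generalized) pentagonal number. This establishes the ``moreover'' clause.

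Finally, the density statement follows by a counting argument. The exponents $k=2an(3n-1)$ grow quadratically in $|n|$, so the number of integers $n$ with $2an(3n-1)\le N$ is $O(\sqrt{N})$. Hence the set of $k$ on which $\cp_{a,a,2a}(k)$ is odd has density zero, and its complement---the set on which $\cp_{a,a,2a}(k)$ is even---has density $1$, giving the claimed limit. I do not anticipate a serious obstacle in this argument; the only point requiring a little care is justifying that the reduction $1+q^j\equiv 1-q^j\pmod 2$ may be applied termwise to the infinite product, which is routine since each coefficient of the product is determined by only finitely many factors.
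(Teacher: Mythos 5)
Your proposal is correct and follows essentially the same route as the paper: specialize \eqref{eq:parityGF} to $m=2a$, reduce $(-q^{4a};q^{4a})_\infty$ to $(q^{4a};q^{4a})_\infty$ modulo $2$, apply Euler's pentagonal number theorem, and conclude density one from the lacunarity of the resulting series. The only difference is that you spell out two details the paper leaves implicit (the distinctness of the generalized pentagonal numbers, which justifies the ``if and only if,'' and the $O(\sqrt{N})$ count behind the density claim), which is fine.
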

\begin{proof}
From \eqref{eq:parityGF}, we see that 
\begin{align*}
\sum_{n=0}^\infty \cp_{a,a,2a}(n)q^n&\equiv (-q^{4a};q^{4a})_{\infty}\pmod{2}\\
&\equiv (q^{4a};q^{4a})_\infty\\
&=\sum_{n=-\infty}^\infty (-1)^nq^{2an(3n-1)}.
\end{align*}
The final equality follows from Euler's pentagonal number theorem:
$$(q;q)_\infty = \sum_{n=-\infty}^\infty (-1)^nq^{n(3n-1)/2}.$$ Then, because the final series is lacunary, we have proved that $\cp_{a,a,2a}(n)$ is even with density $1$. 
\end{proof}

Corollaries \ref{cor:densityLowerBound1} and \ref{cor:densityLowerBound2} come from explicit sets upon which $\cp_{a,a,m}(n)$ is always even. 
Outside of those sets, empirical evidence suggests that the parity is equally balanced, which leads us to the following conjecture.

\begin{conj}\label{conj:evenmodda}
For even $m$ and odd $a$, 

 \begin{numcases}{ \lim_{n\to \infty}\frac{\#\{1\le k\le n\mid\cp_{a,a,m}(k) \text{ is even}\}}{n}=  }
  1 &\text{if } $m=2a$ \label{proved}\\
  \frac{3}{4} & if  $m\equiv 0\pmod{4}$ \label{m0mod4}\\
  \frac{7}{8} & otherwise. \label{OddAEvenmOthers}
  \end{numcases}

\end{conj}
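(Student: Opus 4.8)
The plan is to reason entirely from the parity generating function \eqref{eq:parityGF}. Because $1+x\equiv 1-x\pmod 2$, we have
$$\sum_{n=0}^\infty \cp_{a,a,m}(n)q^n\equiv(-q^{m+2a};q^{2m})_\infty\equiv(q^{m+2a};q^{2m})_\infty\pmod 2,$$
so $\cp_{a,a,m}(n)$ is odd exactly when the number of partitions of $n$ into distinct parts congruent to $m+2a\pmod{2m}$ is odd. Since $m$ is even and $a$ is odd, every exponent $m+2a+2mj$ is even, so I would substitute $q^2\mapsto q$ and reduce to the single product $G(q):=(q^{e};q^{m})_\infty$ with $e=m/2+a$. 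Writing $\delta$ for the density of odd coefficients of $G$, and noting that $\cp_{a,a,m}(n)$ can be odd only for even $n$, the even density of $\cp_{a,a,m}$ equals $1-\delta/2$; the whole conjecture is therefore a statement about $\delta$.

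The first thing to pin down is the lacunary case. One checks that $e\equiv 0\pmod m$ holds if and only if $a=m/2$, i.e.\ $m=2a$; there $G(q)=(q^{m};q^{m})_\infty$ is a dilate of Euler's function, and the pentagonal number theorem forces its odd coefficients onto a density-zero set, so $\delta=0$ and the even density is $1$. This is exactly Corollary \ref{m=2a} and is the only case I can currently settle in full. For the remaining cases I would strip the powers of $2$ from the progression $\{e+mj\}$ to reach an \emph{irreducible} product over an odd modulus. When $m\equiv 0\pmod 4$ the offset $e$ is odd, so no reduction is possible and $G$ is already irreducible; when $m\equiv 2\pmod 4$ with $m\ne 2a$, the offset $e$ is even while $m/2$ is odd, so one further substitution $q^2\mapsto q$ gives $H(q):=(q^{e/2};q^{m/2})_\infty$ over the odd modulus $m/2$. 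Tracking these halvings, the predicted densities $3/4$ and $7/8$ are precisely what one gets under the single hypothesis that the irreducible product has odd coefficients of density $1/2$. In other words, \eqref{m0mod4} and \eqref{OddAEvenmOthers} are equivalent to the equidistribution statement that $(q^{c};q^{d})_\infty$, for a proper residue $c\not\equiv 0\pmod d$, is odd with density exactly $1/2$.

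Two ingredients are within reach: the lacunary case above, and lower bounds that come within a factor of two of each exact density. The congruence arguments behind Corollaries \ref{cor:densityLowerBound1} and \ref{cor:densityLowerBound2} confine all odd values of $\cp_{a,a,m}$ to a progression of density $1/2$ when $m\equiv 0\pmod 4$ and of density $1/4$ when $m\equiv 2\pmod 4$, yielding even density at least $1/2$, respectively $3/4$. The main obstacle is the equidistribution step — upgrading these bounds to the exact densities by showing the irreducible product is odd on exactly half of its surviving progression. This is the same difficulty that keeps the parity of the ordinary partition function open: the surviving product is not lacunary, and pinning down the density of its odd coefficients amounts to controlling a modular form modulo $2$ under the Hecke action, which is beyond current technology. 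I therefore expect \eqref{m0mod4} and \eqref{OddAEvenmOthers} to remain conjectural — supported by the data in Appendix \ref{sec:datatables} — with only \eqref{proved} provable.
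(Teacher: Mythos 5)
The statement you were asked to prove is a conjecture, and the paper does not prove it: only case \eqref{proved} is established (as Corollary \ref{m=2a}), with Corollaries \ref{cor:densityLowerBound1} and \ref{cor:densityLowerBound2} supplying the lower bounds $1/2$ and $3/4$ and Table \ref{tab:conjevenmoda} supplying numerical evidence for the rest. Your proposal matches this state of affairs exactly and honestly: your proof of \eqref{proved} is the paper's own (reduce \eqref{eq:parityGF} to a dilate of $(q;q)_\infty$ and invoke the pentagonal number theorem), your two lower bounds are the paper's two corollaries, and your conclusion that \eqref{m0mod4} and \eqref{OddAEvenmOthers} are out of reach is the authors' implicit position. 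What you add, and what the paper only gestures at with ``outside of those sets\ldots the parity is equally balanced,'' is a precise reduction: after stripping powers of $2$ from the progression, cases \eqref{m0mod4} and \eqref{OddAEvenmOthers} become equivalent to the single hypothesis that the surviving product $(q^c;q^d)_\infty$ has odd coefficients with density exactly $1/2$, and your bookkeeping (even density $1-\delta/2$ with $\delta=1/2$ versus $\delta=1/4$) is correct. One caveat worth flagging: your claim that $e=m/2+a\equiv 0\pmod m$ forces $m=2a$ is only valid when $a<m$ (really, when $a\not\equiv m/2\pmod m$ fails only at $a=m/2$); for instance $m=2$, $a=3$ gives $e=4\equiv 0\pmod 2$, where $G(q)=(q^4;q^2)_\infty=(q^2;q^2)_\infty/(1-q^2)$ is a partial-sum perturbation of the lacunary case whose odd-coefficient density one can compute exactly to be $2/3$, yielding an even density of $5/6$ rather than the conjectured $7/8$. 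So your framework, pushed slightly further, actually exposes degenerate parameter choices (those with $a\equiv m/2\pmod m$, $a\ne m/2$) where the conjecture as literally stated fails; this is a defect of the conjecture's hypotheses rather than of your argument, but your reduction should carry the standing assumption $e\not\equiv 0\pmod m$ (and, more generally, should note that for $a>m/2$ the product $(q^e;q^m)_\infty$ omits finitely many factors of the full progression product, which is harmless for the randomness heuristic but fatal in the lacunary case).
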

\noindent
Note that \eqref{proved} is Corollary \ref{m=2a}.

On the other hand, for $m$ odd, as is the case with $p(n)$, we do not know explicit sets upon which $\cp_{a,a,m}(n)$ is always even, and we make the following conjecture. 
\begin{conj}\label{conj:oddm}
For odd $m$ and $\gcd(a,m)=1$, $\cp_{a,a,m}(n)$ is even (odd) with density $\frac{1}{2}$. 
That is, $$\lim_{n\to \infty}\frac{\#\{1\le k\le n\mid\cp_{a,a,m}(k) \text{ is even (odd)}\}}{n}=\frac{1}{2}.$$
\end{conj}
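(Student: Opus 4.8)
The plan is to reduce the conjecture, via the mod-$2$ generating function already established, to an equidistribution statement for a single explicit $q$-product. First I would note that since $(1-q^k)\equiv(1+q^k)\pmod 2$, \eqref{eq:parityGF} gives
\[
\sum_{n=0}^\infty \cp_{a,a,m}(n)q^n \equiv (q^{m+2a};q^{2m})_\infty \pmod 2,
\]
so $\cp_{a,a,m}(n)$ is odd precisely when the number of partitions of $n$ into distinct parts congruent to $m+2a\pmod{2m}$ is odd. When $m$ is odd and $\gcd(a,m)=1$ the exponent $m+2a$ is odd, whence $\gcd(m+2a,2m)=\gcd(m+2a,m)=\gcd(2a,m)=\gcd(a,m)=1$; thus the distinguished progression is a primitive residue class modulo $2m$, in contrast to the even-$m$ cases of the earlier corollaries. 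The conjecture is then equivalent to showing that the coefficients of the single-progression product $\prod_{j\ge0}\bigl(1-q^{m+2a+2mj}\bigr)$ are equidistributed modulo $2$.

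To attack this, I would try to situate the series $(q^{m+2a};q^{2m})_\infty \bmod 2$ within the theory of modular forms modulo $2$, imitating the framework developed for the parity of $p(n)$ by Ono, Nicolas, and Bella\"iche. The first goal would be to realize this reduction, after multiplying by a suitable power of $q$ and by auxiliary eta-quotients that are $\equiv 1 \pmod 2$, as the reduction of a holomorphic form on some $\Gamma_0(N)$ with $N$ supported on the primes dividing $2m$, and then to analyze the induced Hecke action and the associated residual Galois representation. A natural first milestone would be to rule out lacunarity: by Serre's dichotomy a positive-weight form is lacunary modulo $2$ only when it is a linear combination of forms with complex multiplication, and verifying that our form is not of this type would already force the odd values of $\cp_{a,a,m}(n)$ to occur with positive density --- a first nontrivial step toward, though still well short of, the conjectured density $\tfrac12$.

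The main obstacle, which I expect to block a complete proof, is twofold. First, unlike the partition function, whose generating function $1/(q;q)_\infty$ is an honest eta-quotient, the single-progression product $(q^{m+2a};q^{2m})_\infty$ is not a modular form a priori, so the step of exhibiting genuine modular structure --- rather than merely a convenient congruence --- is delicate and may require combining several residue classes or uncovering a symmetry special to these products. Second, even granting full modularity, the exact density $\tfrac12$ lies far beyond current technology: as the introduction records, for the genuinely modular $p(n)$ one cannot yet prove positive density of either parity, and the best available bounds \cite{BellaicheGreenSound} are very far from $\tfrac12$. I would therefore expect this approach to yield at most partial results --- infinitely-often or lower-density statements for each parity --- while equidistribution at density exactly $\tfrac12$ remains out of reach, consistent with the statement being posed here as a conjecture rather than a theorem.
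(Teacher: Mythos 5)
You were asked about Conjecture \ref{conj:oddm}, which the paper does not prove: it is stated as a conjecture and explicitly listed as open in the conclusion, so there is no proof of record to compare yours against, and you rightly do not claim to supply one. Your point of departure is correct and coincides with what the paper \emph{does} establish: by \eqref{eq:parityGF}, which follows from the self-conjugation argument of Theorem \ref{thm:selfConj}, the parity of $\cp_{a,a,m}(n)$ equals the parity of the number of partitions of $n$ into distinct parts $\equiv m+2a \pmod{2m}$, and your computation $\gcd(m+2a,2m)=\gcd(a,m)=1$ for odd $m$ correctly explains why the explicit even progressions of Corollaries \ref{cor:densityLowerBound1} and \ref{cor:densityLowerBound2} have no analogue here. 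Everything after that reduction is a research program rather than an argument, and, as written, it cannot be completed.

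Beyond the honest admission that density exactly $\tfrac12$ is out of reach, there is a conceptual problem with your ``first milestone.'' If you succeeded in realizing $(q^{m+2a};q^{2m})_\infty \bmod 2$ as the mod-$2$ reduction of a holomorphic modular form of integer weight $k\ge 1$ on some $\Gamma_0(N)$, then Serre's divisibility theorem (almost all coefficients of such a form are divisible by any fixed modulus) would force the odd values of $\cp_{a,a,m}(n)$ to have density \emph{zero} --- the opposite of the conjectured $\tfrac12$, and contrary to the computational evidence. The CM dichotomy you invoke governs lacunarity in the sense of exact vanishing of coefficients over $\mathbb{C}$; in the mod-$2$ setting every holomorphic integer-weight form is already ``lacunary,'' so ruling out CM buys nothing, and in particular it would not yield positive density of odd values. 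The reason $p(n)$ (and presumably $\cp_{a,a,m}(n)$) can plausibly be odd half the time is precisely that the relevant generating function is \emph{not} congruent to any holomorphic integer-weight form; the same obstruction you flag for the single-progression product is therefore not a technical nuisance to be engineered around but the essential feature of the problem. What this circle of ideas can realistically deliver are lower bounds of order $\sqrt{N}$ (up to logarithms) for the number of odd and of even values, in the spirit of \cite{BellaicheGreenSound} and of the arguments the paper gives for $\cp_{a,m-a,m}(n)$ in Section 4; any positive-density statement, let alone equidistribution, remains open.
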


Table \ref{tab:conjevenmoda} provides some computational evidence for Conjecture \ref{conj:evenmodda}.
\renewcommand{\arraystretch}{1.5}

\begin{table}[h]
\begin{center}
    \begin{tabular}{>{$}c<{$}>{$}c<{$}>{$}c<{$}}\toprule
     n & 
     \frac{\#\{1\le k\le n\mid\cp_{3,3,4}(k) \text{ is even}\}}{n}  & 
     \frac{\#\{1\le k\le n\mid\cp_{1,1,6}(k) \text{ is even}\}}{n} \\ \midrule
     1000 & 0.765 & 0.871\\
     3000 & 0.752  & 0.875\\
     5000 & 0.753  & 0.874 \\
     7000 & 0.749  & 0.875\\
     9000 & 0.748  & 0.873\\
     11000 & 0.749 & 0.874\\
     13000 & 0.750 & 0.875\\
     15000 & 0.749 & 0.875
\end{tabular}
\end{center}\caption{The proportion of even values of $\cp_{a,a,m}(k)$ for $1\le k\le n$ when $(a,m)$ is $(3,4)$ and $(1,6)$.}\label{tab:conjevenmoda}
\end{table}

\section{The Parity of $\cp_{a,m-a,m}(n)$}
In this section, we explore the parity of the values of another special family of copartition functions: $\cp_{a,m-a,m}(n).$ Note that $$\mathbf{cp}_{a,m-a,m}(q)=\frac{(q^m;q^m)_\infty}{(q^a;q^m)_\infty(q^{m-a};q^m)_\infty}=\frac{(q^m;q^m)^2_\infty}{f(q^a,q^{m-a})},$$ where $f(x,y)=\sum_{n=-\infty}^\infty x^{n(n+1)/2}y^{n(n-1)/2}$ is Ramanujan's theta function. This form for $\mathbf{cp}_{a,m-a,m}(q)$ suggests that there is a wide range of analytic tools that we can use to study this family of functions. 

A very basic question one may ask is simply,
for which $a,m$ 
does the sequence $\{\cp_{a,m-a,m}(n)\}_{n=0}^\infty$ take both even and odd values infinitely often?

\begin{theorem}
For all $a,m$, 
the sequence $\{\cp_{a,m-a,m}(n)\}_{n=0}^\infty$ takes both even and odd values infinitely often.
\end{theorem}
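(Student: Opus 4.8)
The plan is to analyze the generating function modulo $2$ and exploit the product structure $\mathbf{cp}_{a,m-a,m}(q)=(q^m;q^m)^2_\infty/f(q^a,q^{m-a})$. First I would reduce everything mod $2$. Since $(q^m;q^m)^2_\infty \equiv (q^{2m};q^{2m})_\infty \pmod 2$ (squaring a product over $\mathbb{F}_2$ collapses cross terms), the square in the numerator becomes a single product with all exponents doubled, which is convenient. The denominator $f(q^a,q^{m-a})=\sum_{n=-\infty}^\infty q^{an(n+1)/2+(m-a)n(n-1)/2}$ is a theta series whose exponents I would simplify to a single quadratic in $n$; call it $\sum_n q^{Q(n)}$ where $Q(n)=\tfrac{m}{2}n^2 - \tfrac{m-2a}{2}n$. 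To handle the reciprocal mod $2$, I would multiply through and instead study the congruence $\mathbf{cp}_{a,m-a,m}(q)\cdot f(q^a,q^{m-a}) = (q^m;q^m)^2_\infty$, working entirely with power series over $\mathbb{F}_2$.

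The key engine is the standard fact that a nonzero modular-form-type product like $(q^m;q^m)^2_\infty$ (equivalently $(q^{2m};q^{2m})_\infty \bmod 2$) is \emph{lacunary}: by Euler's pentagonal number theorem the exponents appearing with coefficient $1$ form a density-zero set, namely $2m$ times the generalized pentagonal numbers. So I would write
\begin{equation*}
\left(\sum_{k\ge 0}\cp_{a,m-a,m}(k)q^k\right)\left(\sum_{n=-\infty}^\infty q^{Q(n)}\right)\equiv \sum_{j=-\infty}^\infty q^{m j(3j-1)}\pmod 2,
\end{equation*}
and then argue by contradiction. Suppose $\cp_{a,m-a,m}(k)$ is odd for only finitely many $k$ (this is the harder of the two directions; the "even infinitely often" direction will follow by a parallel or simpler argument). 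Then the left-hand series, mod $2$, is a \emph{polynomial} times the theta series $\sum_n q^{Q(n)}$, since only finitely many $\cp$-coefficients survive mod $2$. The product of a fixed polynomial with a theta series of positive density of nonzero coefficients must have positive density of nonzero coefficients, because a nonzero theta series of this quadratic shape is itself non-lacunary (its support has positive density in the relevant arithmetic progressions). But the right-hand side is lacunary, a contradiction. The same contradiction rules out $\cp_{a,m-a,m}(k)$ being even for only finitely many $k$: in that case the left side mod $2$ would equal $\big(\sum_{k\in S}q^k + \sum_{k\ge k_0}q^k\big)\cdot(\text{theta})$ for a finite set $S$, and I would compare densities to contradict lacunarity of the right side.

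The main obstacle I anticipate is making the density bookkeeping rigorous, specifically showing that multiplying the density-zero (lacunary) target by the "inverse" of the theta series cannot yield a set that is both eventually-periodic-or-finite mod $2$. The clean way to avoid manipulating the reciprocal of a power series is to keep the theta function on the left, as written above, and phrase the whole argument as: if the $\cp$-sequence were eventually of constant parity, the left-hand product would have positive upper density of odd coefficients, whereas the right-hand side $\sum_j q^{mj(3j-1)}$ has density zero. Establishing that the product $P(q)\cdot\sum_n q^{Q(n)}$ has positive density of odd coefficients for any nonzero polynomial $P$ is the crux; I expect this to come down to showing that the exponents $Q(n)$, reduced modulo a suitable modulus and shifted by $\deg P$ worth of translates, still cover a positive-density set with an odd number of representations infinitely often. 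I would isolate this as the technical heart, likely invoking that a single quadratic progression $\{Q(n)\}$ already has positive density (in fact $\asymp \sqrt{N}$ distinct values up to $N$, which is density zero) — here I would need to be careful, since $\{Q(n)\}$ is itself \emph{thin}, so the correct comparison is between two thin sets, and the real argument should compare the \emph{sparsity rates} ($\sqrt{N}$ for both theta-type sides) rather than density, concluding that a finite-parity $\cp$-sequence would force an incompatible count of odd coefficients at the pentagonal exponents.
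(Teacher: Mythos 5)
Your setup is the same as the paper's: multiply $\mathbf{cp}_{a,m-a,m}(q)$ by the theta-series denominator via the Jacobi triple product and compare, mod $2$, with the lacunary pentagonal series $\sum_k q^{mk(3k-1)}$. But the engine you propose for the ``odd infinitely often'' direction is broken. You assert that the theta series $\sum_n q^{Q(n)}$ with $Q(n)=an+mn(n-1)/2$ ``is itself non-lacunary (its support has positive density),'' and that therefore a polynomial times it has positive density of odd coefficients. This is false: the support of a one-variable quadratic theta series up to $N$ has size $O(\sqrt{N})$, hence density zero, so both sides of your congruence are lacunary and no density comparison can distinguish them. You notice this yourself in the final paragraph and pivot to ``compare the $\sqrt{N}$ sparsity rates,'' but that comparison is not carried out, and it cannot be carried out without the one idea that is genuinely missing: control of cancellation mod $2$. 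If $S=\{k : \cp_{a,m-a,m}(k)\text{ odd}\}$ is finite, the odd coefficients of $P(q)\cdot\Theta(q)$ sit at exponents $Q(n)+s$, $s\in S$, and a priori these translates could collide in pairs and cancel, leaving far fewer than $|S|\cdot c\sqrt{N}$ odd coefficients. The paper closes exactly this hole with a local gap argument: since $Q$ is injective for $a\ne m/2$ and consecutive values of $Q$ are eventually more than $d=\max S$ apart, far enough out the left-hand side has two odd coefficients within distance $d$ of each other (at $Q(N_1)$ and $Q(N_1)+d$), while the odd coefficients of the right-hand side are eventually more than $d$ apart --- a contradiction that needs no global counting at all. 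Without that step (or an equivalent no-cancellation lemma), your argument does not go through. You also silently assume the generic shape of $\Theta$; the case $a=m/2$, where $Q(n)=Q(-n)$ and the theta series collapses mod $2$, has to be split off and handled directly, as the paper does.

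The ``even infinitely often'' direction is closer to salvageable but is misattributed in your write-up. There the positive density on the left comes entirely from the $\sum_{k\ge k_0}q^k = q^{k_0}/(1-q)$ factor, not from the theta series: one must actually show that $\Theta(q)/(1-q)$ has a positive density (the paper computes it to be $2a/m$) of odd coefficients, which the paper does by pairing the terms of $\Theta$ into non-overlapping finite geometric series of lengths $a(2n+1)$. You call this ``the same contradiction'' as the first direction, i.e., you again lean on the false claim that $\Theta$ alone is non-lacunary. So in both halves the crux --- why the left-hand side provably has more, or more closely spaced, odd coefficients than the lacunary right-hand side --- is exactly the part that is asserted rather than proved.
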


\begin{proof}
Since $\cp_{a,m-a,m}(n) = \cp_{m-a,a,m}(n)$,
without loss of generality, we assume $0 < a \le m/2$.

If $a=m/2$, 
by \eqref{eq:cpgf} we have
\begin{align}\label{eq:cpgf2}
   {\mathbf{cp}}_{m/2,m/2,m}(q)&
   =\frac{(q^{m};q^m)_\infty}{(q^{m/2};q^m)_\infty(q^{m/2};q^m)_\infty}
   \equiv \frac{(q^{m};q^{m})_\infty}{(q^{m};q^{2m})_\infty} \equiv (q^{2m};q^{2m})_\infty \pmod 2.
\end{align}
By Euler's Pentagonal Number Theorem, we see that the right-hand side of \eqref{eq:cpgf2} takes both even and odd values infinitely often.

For $a \ne m/2$, 
by \eqref{eq:cpgf} we have
\begin{align}\label{eq:cpgf3}
   {\mathbf{cp}}_{a,m-a,m}(q)&
   =\frac{(q^{m};q^m)^2_\infty}{(q^{m-a};q^m)_\infty(q^a;q^m)_\infty(q^{m};q^m)_\infty}
   \equiv \frac{(q^{2m};q^{2m})_\infty}{(q^{m-a};q^m)_\infty(q^a;q^m)_\infty(q^{m};q^m)_\infty} \pmod 2.
\end{align}
By applying Jacobi's triple product identity to the denominator of \eqref{eq:cpgf3}, multiplying both sides by that denominator, and applying Euler's Pentagonal Number Theorem to the remaining right-hand side, we have 
\begin{align}\label{eq:cpgf4}
    {\mathbf{cp}}_{a,m-a,m}(q)
   \sum_{n=-\infty}^\infty (-1)^n q^{an + mn(n-1)/2}
   \equiv  \sum_{k=-\infty}^\infty q^{mk(3k-1)} \pmod 2.
\end{align}

Now suppose $\cp_{a,m-a,m}(n)$ has finitely many odd values, and let $d$ be the largest integer such that $\cp_{a,m-a,m}(d)$ is odd.
Notice that $\cp_{a,m-a,m}(0) = 1$ is also odd.
If $d=0$, then we have from \eqref{eq:cpgf4} that $\{an + mn(n-1)/2 \mid n \in \Z \} = \{mn(3n-1) \mid n \in \Z \}$, which is not possible.
For $d>0$, we derive a contradiction by showing that if we go out far enough, the left-hand side of \eqref{eq:cpgf4} has two close odd values, but the right-hand side does not.
A short computation shows that since $a \ne m/2$,
$an_1 + mn_1(n_1-1)/2 = an_2 + mn_2(n_2-1)/2$ if and only if $n_1=n_2$.
Let $N_0$ be so large that for $|n| \geq N_0$, consecutive values of each set
$\{an + mn(n-1)/2 \mid n \in \Z \}$ and $\{mn(3n-1) \mid n \in \Z \}$ are more than $d$ apart.
Let $N_1 > N_0$ be so large that 
$aN_1 + mN_1(N_1-1)/2 > mN_0(3N_0-1)$.
Then, since $\cp_{a,m-a,m}(0)$ and $\cp_{a,m-a,m}(d)$ are both odd and $N_1 > N_0$, the coefficients of $q^{aN_1 + mN_1(N_1-1)/2}$ and $q^{aN_1 + mN_1(N_1-1)/2 +d}$ on the left-hand side of  \eqref{eq:cpgf4} are both odd.
However, for exponents in that range, the terms of the right-hand side of \eqref{eq:cpgf4} with odd coefficients are more than $d$ terms apart, a contradiction.
Thus $\cp_{a,m-a,m}(n)$ has infinitely many odd values.

To show $\cp_{a,m-a,m}(n)$ is even infinitely often, define $E_{a,m}$ to be the set of nonnegative integers $n$ such that $\cp_{a,m-a,m}(n)$ is even, and define $G_{a,m}(q) = \sum_{n \in E_{a,m}} q^n$.
Notice
$$ \frac{1}{1-q} \equiv  {\mathbf{cp}}_{a,m-a,m}(q) + G_{a,m}(q) \pmod 2.$$
Multiplying both sides by the denominator in \eqref{eq:cpgf3} after applying Jacobi's triple product identity, we have 
\begin{align}\label{eq:evencpvalues} \frac{\sum_{n=-\infty}^\infty (-1)^n q^{an + mn(n-1)/2} }{1-q} &\equiv (q^{m};q^m)^2_\infty + G_{a,m}(q) \sum_{n=-\infty}^\infty (-1)^n q^{an + mn(n-1)/2} \pmod{2} \\
&\equiv (q^{2m};q^{2m})_\infty + G_{a,m}(q) \sum_{n=-\infty}^\infty q^{an + mn(n-1)/2}  \pmod 2.
\end{align}
We can rewrite the left-hand side of \eqref{eq:evencpvalues} by pairing  summands as
\begin{align}
   \frac{1}{1-q} &\sum_{n=0}^\infty (-1)^n [ q^{-an + mn(n+1)/2} - q^{a(n+1) + mn(n+1)/2} ] 
   = \sum_{n=0}^\infty (-1)^n q^{-an + mn(n+1)/2} \frac{1- q^{a(2n+1)}}{1-q} 
   \nonumber \\
   &\equiv \sum_{n=0}^\infty q^{-an + mn(n+1)/2} + q^{-an + mn(n+1)/2+1}+\cdots + q^{a(n+1) + mn(n+1)/2-1} \pmod 2, \label{eq:sumoffingeom}
\end{align}
where each term of the sum in \eqref{eq:sumoffingeom} has been expanded into a finite geometric series, the sum of $a(2n+1)$ consecutive powers of $q$.
These finite geometric series do not overlap, and so  
a short computation shows that \eqref{eq:sumoffingeom} has 
$aN^2$ odd terms up through the
$q^{\lfloor mN^2/2 \rfloor}$ term, so that the terms with odd coefficients in \eqref{eq:sumoffingeom} and the left-hand side of \eqref{eq:evencpvalues} have density $2a/m$.

By Euler's Pentagonal Number Theorem, the odd values of $(q^{2m};q^{2m})_\infty$ have density zero, thus the nonzero values of the product $G_{a,m}(q) \sum_{n=-\infty}^\infty (-1)^n q^{an + mn(n-1)/2} $  must have density $2a/m$. 
Since the nonzero values of $\sum_{n=-\infty}^\infty (-1)^n q^{an + mn(n-1)/2}$ have density zero, there must be infinitely many nonzero terms of $G_{a,m}(q)$.
\end{proof}

\begin{remark}
Some of the analysis above is very similar to that of Berndt, Yee, and Zaharescu \cite{BYZ} in treating the parity of $p(r,s;n)$, the number of partitions of $n$ into parts congruent to $r$, $s$, or $r+s$ modulo $r+s$.
In the second half of the proof above, tracking the number of odd terms as they did, one can arrive at the quantitative result that 
$\# \{ n < N \mid \cp_{a,m-a,m}(n) \text{ is even} \} > (a/\sqrt{2m}-o(1))\sqrt{N}$
for all $a \le m/2$.
In the special case $a=m/2$, we have $\# \{ n < N \mid \cp_{a,m-a,m}(n) \text{ is even} \} > (1-o(1))N$.
\end{remark}

Empirical evidence suggests that the parity of $\cp_{a,m-a,m}(n)$ may be balanced for many but not all $a,m$ with $\gcd(a,m)=1$.
This leads us to the following conjecture and question.

\begin{conj}\label{conj:oddm5050}
For odd $m$ and $\gcd(a,m)=1$, the sequence $\{\cp_{a,m-a,m}(n)\}_{n=0}^\infty$ takes both even and odd values with density $\frac{1}{2}$. 
That is, $$\lim_{n\to \infty}\frac{\#\{1\le k\le n\mid\cp_{a,m-a,m}(k) \text{ is even (odd)}\}}{n}=\frac{1}{2}.$$
\end{conj}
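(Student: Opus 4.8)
The plan is to attack this through the theory of modular forms modulo $2$, treating the conjecture as an equidistribution statement for the mod-$2$ Fourier coefficients of a single modular object. First I would record, exactly as in the proof of the previous theorem, the mod-$2$ identity
\begin{equation*}
{\mathbf{cp}}_{a,m-a,m}(q)\, f(q^a,q^{m-a}) \equiv (q^{2m};q^{2m})_\infty \pmod 2,
\end{equation*}
which is \eqref{eq:cpgf4} rewritten using Jacobi's triple product to fold the denominator of \eqref{eq:cpgf3} into the single theta series $f(q^a,q^{m-a})$. Since $f(q^a,q^{m-a})$ is, up to a rational power of $q$, a weight-$\tfrac12$ theta series and $(q^{2m};q^{2m})_\infty = q^{-m/12}\eta(2m\tau)$ is also of weight $\tfrac12$, this exhibits ${\mathbf{cp}}_{a,m-a,m}(q)\bmod 2$ as the reduction of a weight-$0$ modular \emph{function} --- concretely, as the reciprocal of the theta series times the eta product, on a congruence subgroup of level dividing $4m$. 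The hypotheses $m$ odd and $\gcd(a,m)=1$ are what should prevent $f(q^a,q^{m-a})$ from acquiring the extra internal symmetry that would make the reciprocal lacunary, as happens in the degenerate case $a=m/2$ treated in \eqref{eq:cpgf2}.

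Next I would place $F(q) := {\mathbf{cp}}_{a,m-a,m}(q)\bmod 2$ inside the space of power series over $\mathbb{F}_2$ arising as reductions of modular forms on $\Gamma_1(N)$, $N\mid 4m$, and exploit that this space is a module over the mod-$2$ Hecke algebra. In the Tate--Serre--Nicolas framework for modular forms modulo $2$, the density of the support of $F$ --- that is, the density of $n$ with $\cp_{a,m-a,m}(n)$ odd --- is controlled by the Hecke orbit of $F$ and by the residual Galois representations attached to the maximal ideals of the Hecke algebra occurring in $F$. The goal would then be to decompose $F$ into Hecke eigensystems and to identify, for each, the associated mod-$2$ Galois representation; Serre's theorem on the density of vanishing coefficients computes the density of the zero set of a mod-$\ell$ form from the image of this representation, and the target value $\tfrac12$ would follow if the relevant images are large enough that a Chebotarev equidistribution argument splits the coefficients evenly between the two residues.

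The hard part --- and this is the crux --- is exactly the wall that keeps the parity of $p(n)$ open. The object $F$ is the reciprocal of a half-integral weight form, hence a modular function rather than a cusp form, and for such non-lacunary objects no present technique pins the density of odd coefficients at $\tfrac12$, or even establishes positive density; the strongest results in the parallel $p(n)$ setting give only a lower bound of order $\sqrt{X}$ (up to a slowly varying factor) for the count of odd values up to $X$ \cite{BellaicheGreenSound}, exponentially far from density $\tfrac12$. Moreover the mod-$2$ Hecke theory and attached Galois representations are far better developed for integral-weight cusp forms than for the half-integral-weight reciprocals arising here, so even the decomposition step is delicate.

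Consequently I would expect only conditional or partial progress in the near term. Unconditionally, the theorem just proved already gives infinitely many odd values, and its quantitative refinement in the following remark yields more than a constant times $\sqrt{N}$ even values up to $N$; conditionally on a suitable effective equidistribution statement for the mod-$2$ coefficients of $F$ --- equivalently, on the expected largeness of the attached Galois images --- the exact density $\tfrac12$ would follow from the Chebotarev argument above. Absent such an input, the conjecture should be regarded as being of the same depth as the parity problem for $p(n)$ itself.
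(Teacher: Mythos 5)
The statement you were asked to prove is a conjecture, not a theorem: the paper offers no proof of it, explicitly lists it among the open problems in the Conclusion, and supports it only with the computational data in Appendix~\ref{sec:datatables}. You correctly recognized this --- your write-up is a research program rather than a proof, and your closing assessment that the conjecture sits at the same depth as the parity problem for $p(n)$ matches the paper's own framing (the introduction notes that even positive density of odd values of $p(n)$ is open). So there is no proof in the paper to compare against, and nothing in your proposal closes the gap either; the only unconditional statements available are the ones already proved in Section~4, namely that $\cp_{a,m-a,m}(n)$ takes each parity infinitely often, with the $\sqrt{N}$-type lower bound on even values recorded in the remark following that theorem.

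One substantive caution about the program you sketch. Your starting identity $\mathbf{cp}_{a,m-a,m}(q)\,f(q^a,q^{m-a}) \equiv (q^{2m};q^{2m})_\infty \pmod 2$ is correct (it is \eqref{eq:cpgf4} in product form), but the second step --- placing $F(q) = \mathbf{cp}_{a,m-a,m}(q) \bmod 2$ inside the space of reductions of holomorphic modular forms on some $\Gamma_1(N)$ and then invoking Serre's density theorem --- is self-defeating if taken literally. Serre's theorem says precisely that the nonzero coefficients of the mod-$\ell$ reduction of a holomorphic integral-weight form have density \emph{zero}; so if $F$ admitted such a realization, you would have proved that $\cp_{a,m-a,m}(n)$ is odd with density $0$, refuting the conjecture rather than establishing density $\tfrac12$. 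The whole difficulty (as with $1/(q;q)_\infty \bmod 2$) is that $F$ is the reciprocal of a theta series, a weight-zero modular function with poles, and no Hecke/Galois machinery currently yields equidistribution for such objects. You acknowledge this in your third paragraph, but the second paragraph should not be presented as a viable route even conditionally without specifying what replaces the (inapplicable) holomorphic theory.
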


\begin{quest}\label{quest:abne}
For which $a,m$ with $\gcd(a,m)=1$ is $$\lim_{n\to \infty}\frac{\#\{1\le k\le n\mid\cp_{a,m-a,m}(k) \text{ is even}\}}{n} = \frac12?$$
Furthermore, when the limit is not $1/2$, what is 
$$\lim_{n\to \infty}\frac{\#\{1\le k\le n\mid\cp_{a,m-a,m}(k) \text{ is even}\}}{n} ?$$
When the limit is not $1/2$, is it always $1$?
\end{quest}

In Appendix \ref{sec:datatables}, we provide computational data surrounding Conjecture \ref{conj:oddm5050} and
Question \ref{quest:abne}.

\subsection{The parity of $\cp_{3,1,4}(n)$ and $\cp_{5,1,6}(n)$}

In two special cases, we can answer Question \ref{quest:abne}
and give the exact asymptotic density of $n$ for which $\cp_{a,b,m}(n)$ is even.
The generating functions for $\cp_{3,1,4}(n)$ and $\cp_{5,1,6}(n)$ have nice properties that allow us to apply the classical theory of binary quadratic forms to show that $\cp_{3,1,4}(n)$ and $\cp_{5,1,6}(n)$ are even on sets with arithmetic density one.

We now provide an explicit set with arithmetic density one on which $\cp_{3,1,4}(n)$ is even. 
\begin{theorem}\label{thm:cp314even}
When $a=3, b=1,$ and $m=4$,
$$\lim_{n\to \infty}\frac{\#\{1\le k\le n\mid\cp_{3,1,4}(k) \text{ is even}\}}{n} = 1.$$
In particular, $\cp_{3,1,4}(n)$ is even if  the prime factorization of $24n+5$ has a prime $\equiv 3 \pmod 4$ occurring with an odd exponent.
\end{theorem}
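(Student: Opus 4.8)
The plan is to reduce $\mathbf{cp}_{3,1,4}(q)$ modulo $2$ to a product of two Euler products whose exponents, after completing the square, force $24n+5$ to be a sum of two squares. Starting from \eqref{eq:cpgf} with $(a,b,m)=(3,1,4)$, observe that $(q;q^4)_\infty(q^3;q^4)_\infty=(q;q^2)_\infty=(q;q)_\infty/(q^2;q^2)_\infty$, so that $\mathbf{cp}_{3,1,4}(q)=(q^4;q^4)_\infty(q^2;q^2)_\infty/(q;q)_\infty$. Applying the mod-$2$ Frobenius identity $(q^k;q^k)_\infty^2\equiv(q^{2k};q^{2k})_\infty\pmod 2$ twice, the numerator reduces to $(q;q)_\infty^6$, and since $(q;q)_\infty$ is invertible over $\mathbb{F}_2$ this yields
\[
\mathbf{cp}_{3,1,4}(q)\equiv (q;q)_\infty^5\equiv(q;q)_\infty(q^4;q^4)_\infty\pmod 2.
\]
This first step is routine mod-$2$ manipulation.

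Next I would apply Euler's Pentagonal Number Theorem to each factor, writing $(q;q)_\infty(q^4;q^4)_\infty=\sum_{i,j}(-1)^{i+j}q^{E(i,j)}$ with $E(i,j)=i(3i-1)/2+2j(3j-1)$. A direct completion of the square gives the key identity $24E(i,j)+5=(6i-1)^2+(12j-2)^2$. Consequently, if $\cp_{3,1,4}(n)$ is odd then the coefficient of $q^n$ in $(q;q)_\infty(q^4;q^4)_\infty$ is nonzero, so $n=E(i,j)$ for some integers $i,j$, and therefore $24n+5$ is a sum of two squares. In other words,
\[
\{\,n:\cp_{3,1,4}(n)\text{ is odd}\,\}\subseteq\{\,n:24n+5\text{ is a sum of two squares}\,\}.
\]
The ``in particular'' clause is then immediate from the classical characterization of sums of two squares: a positive integer is a sum of two squares if and only if every prime $\equiv 3\pmod 4$ in its factorization occurs to an even power. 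Hence if $24n+5$ has a prime $\equiv 3\pmod 4$ with odd exponent, it is not a sum of two squares, so $n$ lies outside the right-hand set and $\cp_{3,1,4}(n)$ is even.

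Finally, for the density statement I would invoke the Landau--Ramanujan theorem: the number of integers up to $x$ that are sums of two squares is $O(x/\sqrt{\log x})$, a set of density zero. Since $n\mapsto 24n+5$ maps $\{1,\dots,N\}$ into the initial segment of an arithmetic progression below $24N+5$, the count of $n\le N$ with $24n+5$ a sum of two squares is at most $O(N/\sqrt{\log N})$, hence has density zero. The containment above then shows $\{n:\cp_{3,1,4}(n)\text{ is odd}\}$ has density zero, so the even values have density one.

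The only genuinely delicate point is the completion of the square in the second step: arranging that the factor $24$ and the shift $5$ align so that $24E+5$ is an honest sum of two squares $(6i-1)^2+(12j-2)^2$. It is also worth emphasizing that the argument is one-directional — we only use that an odd coefficient forces a representation of $24n+5$ as a sum of two squares, never the converse — and that this single implication is all the density argument requires.
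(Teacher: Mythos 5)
Your proof is correct and follows essentially the same route as the paper: reduce the generating function to $(q;q)_\infty(q^4;q^4)_\infty \pmod 2$, complete the square to land on representations of $24n+5$ as a sum of two squares, and conclude via the density-zero and prime-factorization characterization of such numbers. The only cosmetic differences are that you pass through $(q;q)_\infty^5$ rather than cancelling directly, and you observe that only the one-directional implication (odd coefficient $\Rightarrow$ representable) is needed, which lets you skip the paper's mod-$24$ analysis establishing the converse.
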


\begin{remark}
Note that $24n+5$ having a prime $\equiv 3 \pmod 4$ occurring with an odd exponent is a sufficient condition, but is not necessary.  
\end{remark}

\begin{proof}
We can rewrite the generating function for $\cp_{3,1,4}$ in a very useful form.
Consider 
 \begin{align}
 \sum_{n=0}^\infty \cp_{3,1,4}(n)q^n&=\frac{(q^{4};q^4)_\infty}{(q^3;q^4)_\infty(q;q^4)_\infty} =  \frac{(q^{4};q^4)_\infty}{(q;q^2)_\infty} \nonumber \\
  &=  \frac{(q^{4};q^4)_\infty(q^2;q^2)_\infty}{(q;q)_\infty} \equiv (q;q)_\infty(q^4;q^4)_\infty \pmod 2. 
\end{align}
Applying Euler's Pentagonal Number Theorem, we then have that
$$
\sum_{n=0}^\infty \cp_{3,1,4}(n)q^n \equiv 
 \left ( \sum_{j=-\infty}^\infty q^{j(3j+1)/2}  \right ) 
    \left ( \sum_{k=-\infty}^\infty q^{2k(3k+1)} \right )
\pmod 2.$$
Thus, $\cp_{3,1,4}(n)$ must be even unless 
$n = j(3j+1)/2 + 2k(3k+1)$ for some integers $j$ and $k$, or
equivalently, unless
$24n + 5 = (6j+1)^2 + 4(6k+1)^2$.
Analyzing this modulo $24$, this occurs if and only if $24n+5$ is represented by the form $A^2 + B^2$.
From the classical theory of binary quadratic forms, we know that the integers $24n+5$ representable by the form $A^2 + B^2$ are precisely those with prime factorizations having all powers of primes $\equiv 3 \pmod 4$ occurring with an even exponent.
Since that set of representable $(24n+5)$s has density zero, we have
$$\lim_{n\to \infty}\frac{\#\{1\le k\le n\mid\cp_{3,1,4}(k) \text{ is even}\}}{n} = 1$$ as desired.
\end{proof}

Considering the set of even values of $\cp_{3,1,4}$ guaranteed by Theorem \ref{thm:cp314even}, it is straightforward to write down arithmetic progressions on which $\cp_{3,1,4}$
is always even.

\begin{cor}\label{cor:cp3141}
For any prime $p>3, p\equiv 3 \pmod 4$, let $24 \delta \equiv 1 \pmod {p^2}$.
Then
$$
\cp_{3,1,4}(p^2k+pt-5\delta) \equiv 0 \pmod 2
$$
for $t=1, 2, \dots, p-1$ and every nonnegative integer $k$.
\end{cor}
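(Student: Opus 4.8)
The plan is to leverage the sufficient condition from Theorem \ref{thm:cp314even}: $\cp_{3,1,4}(n)$ is even whenever $24n+5$ has some prime $\equiv 3\pmod 4$ occurring to an odd power. So the whole task reduces to choosing arithmetic progressions of $n$ that force the prime $p$ (which is $\equiv 3 \pmod 4$) to divide $24n+5$ exactly once. First I would translate the divisibility condition directly: I want $p \mid 24n+5$ but $p^2 \nmid 24n+5$, because then $p$ appears with exponent exactly $1$ in the factorization of $24n+5$, and Theorem \ref{thm:cp314even} immediately gives $\cp_{3,1,4}(n)\equiv 0 \pmod 2$.

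Next I would solve the congruence $24n+5 \equiv 0$. Since $p>3$, both $2$ and $3$ are invertible mod $p^2$, so $24$ is invertible mod $p^2$; this is exactly why the statement defines $\delta$ by $24\delta \equiv 1 \pmod{p^2}$. Writing $n = p^2 k + pt - 5\delta$, I would compute
\begin{align*}
24n + 5 &= 24(p^2 k + pt - 5\delta) + 5 \\
&= 24 p^2 k + 24 p t - 120\delta + 5.
\end{align*}
Since $24\delta \equiv 1 \pmod{p^2}$, we have $120\delta = 5\cdot 24\delta \equiv 5 \pmod{p^2}$, so $-120\delta + 5 \equiv 0 \pmod{p^2}$, and the $24p^2 k$ term is divisible by $p^2$ as well. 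Hence $24n+5 \equiv 24pt \pmod{p^2}$. Because $24t$ is not divisible by $p$ for $t = 1, \dots, p-1$ (as $p>3$ makes $24$ coprime to $p$, and $0 < t < p$), the quantity $24pt$ is divisible by $p$ but not by $p^2$. Therefore $p \mid 24n+5$ and $p^2 \nmid 24n+5$, so $p$ occurs to the exact odd power $1$.

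Applying Theorem \ref{thm:cp314even} then finishes the argument for every such $n$, i.e. for all $t=1,\dots,p-1$ and all $k\ge 0$. I do not expect any serious obstacle here; the computation is routine modular arithmetic, and the only points requiring care are confirming that $24$ is invertible mod $p^2$ (which needs $p>3$, as assumed) and checking that the range $t=1,\dots,p-1$ precisely avoids the case $p\mid 24t$ that would otherwise push the $p$-adic valuation up to $2$ and void the sufficient condition. One subtlety worth flagging is that $t=0$ is deliberately excluded: there $24n+5 \equiv 0 \pmod{p^2}$, giving valuation at least $2$, so the parity test from Theorem \ref{thm:cp314even} no longer applies, which is consistent with the remark that the condition is sufficient but not necessary.
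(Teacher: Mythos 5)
Your proof is correct and is precisely the ``straightforward'' derivation the paper has in mind: the paper gives no explicit proof of this corollary, stating only that it follows from the set of even values guaranteed by Theorem \ref{thm:cp314even}, and your modular computation showing $24n+5 \equiv 24pt \pmod{p^2}$ (hence $p$-adic valuation exactly $1$) is exactly that argument. No issues.
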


Below we give two specific examples.

\begin{cor}\label{cor:cp3142}
For $r=3, 17, 24, 31, 38, 45$, we have 
$$ \cp_{3,1,4}(49k+r) \equiv 0 \pmod 2
$$
for every nonnegative integer $k$.
\end{cor}

\begin{cor}\label{cor:cp3143}
For $r=3, 14, 36, 47, 58, 69, 80, 91, 102, 113$, we have 
$$ \cp_{3,1,4}(121k+r) \equiv 0 \pmod 2
$$
for every nonnegative integer $k$.
\end{cor}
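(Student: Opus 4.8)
The plan is to recognize Corollary \ref{cor:cp3143} as nothing more than the specialization of the general statement Corollary \ref{cor:cp3141} to the prime $p = 11$, so that no new machinery is needed beyond a short modular computation. First I would check that $p = 11$ satisfies the hypotheses of Corollary \ref{cor:cp3141}: indeed $p > 3$ and $11 \equiv 3 \pmod 4$, and here $p^2 = 121$, which is precisely the modulus appearing in the claimed congruences.

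Next I would pin down the integer $\delta$ from Corollary \ref{cor:cp3141}, defined by $24\delta \equiv 1 \pmod{121}$. The quickest route is to note $24 \cdot 5 = 120 \equiv -1 \pmod{121}$, whence $\delta \equiv -5 \pmod{121}$ (equivalently $\delta \equiv 116$). Substituting into the residues $pt - 5\delta$ guaranteed by Corollary \ref{cor:cp3141}, I get $pt - 5\delta \equiv 11t + 25 \pmod{121}$ for $t = 1, 2, \dots, 10$.

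Finally I would tabulate $11t + 25 \pmod{121}$ and reduce each value into $\{0, 1, \dots, 120\}$: for $t = 1, \dots, 8$ this produces $36, 47, 58, 69, 80, 91, 102, 113$ directly, while $t = 9$ gives $124 \equiv 3$ and $t = 10$ gives $135 \equiv 14$, where the wraparound modulo $121$ accounts for the two small residues. Collecting these yields exactly the set $\{3, 14, 36, 47, 58, 69, 80, 91, 102, 113\}$, which is the list of $r$ in the statement. The conclusion then follows immediately from Corollary \ref{cor:cp3141}.

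There is no real obstacle here: the entire content is a finite modular bookkeeping step. The only point requiring care is computing the inverse of $24$ modulo $121$ correctly and then verifying that the ten residues $11t + 25 \pmod{121}$ match the listed $r$ values — in particular noticing that the cases $t = 9, 10$ wrap around to give $3$ and $14$. Everything else is inherited from the already-established Corollary \ref{cor:cp3141}.
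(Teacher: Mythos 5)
Your proposal is correct and matches the paper's intent exactly: the paper presents this corollary as one of ``two specific examples'' of Corollary \ref{cor:cp3141}, i.e., the specialization to $p=11$, and your computation of $\delta \equiv -5 \pmod{121}$ and of the residues $11t+25 \pmod{121}$ (including the wraparound cases $t=9,10$ giving $3$ and $14$) is accurate.
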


In order to treat $\cp_{5,1,6}(n)$, we use the same techniques, but the argument is slightly more complicated.
We first require a lemma showing the relationship between two binary quadratic forms.
\begin{lemma}\label{lem:formequivalence}
An integer $N \equiv 1 \pmod 6$ is representable by the binary quadratic form $A^2 + 3B^2$ if and only if $4N$ is representable by the binary quadratic form $(6J+1)^2 + 3(6K+1)^2$.
\end{lemma}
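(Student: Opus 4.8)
The plan is to prove both directions of the equivalence by an explicit change of variables, reducing the statement to a congruence analysis modulo $6$. The forms in question are $Q_1(A,B)=A^2+3B^2$ and $Q_2(J,K)=(6J+1)^2+3(6K+1)^2$; setting $A=6J+1$ and $B=6K+1$ shows immediately that $Q_2$ is just $Q_1$ restricted to those $(A,B)$ with $A\equiv B\equiv 1\pmod 6$. So the content of the lemma is that the representability of $N$ by $A^2+3B^2$ with \emph{arbitrary} integers can be upgraded, after multiplying by $4$, to a representation with both variables $\equiv 1\pmod 6$. I would frame the whole argument around controlling residues mod $6$.

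First I would prove the easier direction: if $4N$ is represented as $(6J+1)^2+3(6K+1)^2$, then taking $A=6J+1$, $B=6K+1$ gives $4N=A^2+3B^2$, and since $A,B$ are both odd it is a routine check that $A^2+3B^2\equiv 0\pmod 4$ forces $A\equiv B\pmod 2$ (both odd here) and that dividing appropriately yields a representation of $N$ itself; concretely $\left(\tfrac{A+3B}{4}\right)^2+3\left(\tfrac{A-B}{4}\right)^2=\tfrac{4N}{4}=N$ when $A\equiv B\pmod 4$, and one adjusts signs of $J,K$ to arrange this. The condition $N\equiv 1\pmod 6$ will be used to guarantee the relevant quantities are integers. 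I would verify the integrality and the congruence bookkeeping once, carefully, since this is where sign choices matter.

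For the forward direction, suppose $N\equiv 1\pmod 6$ and $N=A^2+3B^2$. I would first pin down the residues of $A$ and $B$ modulo $6$ (equivalently mod $2$ and mod $3$) forced by $N\equiv 1\pmod 6$: reducing mod $3$ gives $A^2\equiv 1$, so $A\equiv \pm 1\pmod 3$, and $B$ is unconstrained mod $3$; reducing mod $2$ gives $A^2+3B^2\equiv A+B\equiv 1\pmod 2$, so $A,B$ have opposite parities. The key step is then to pass from $(A,B)$ to a representation of $4N$ in which both new variables are $\equiv 1\pmod 6$. The natural tool is the identity coming from the norm form of $\mathbb{Z}[\sqrt{-3}]$ (equivalently multiplication by the unit-like element giving the $\times 4$): one has $4(A^2+3B^2)=(A+3B)^2+3(A-B)^2=(A-3B)^2+3(A+B)^2$, which produces four candidate pairs $(\pm A\pm 3B,\ \pm A\mp B)$ (up to sign) representing $4N$. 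I would then show that among these sign choices, and using the freedom that $6J+1$ ranges over all residues $\equiv 1\pmod 6$ as $J$ ranges over $\Z$, at least one choice makes both coordinates $\equiv 1\pmod 6$.

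The main obstacle, and the step I expect to require the most care, is this last residue-matching argument: showing that the four (or eight, counting signs) sign variants of $(A\pm 3B,\ A\mp B)$ always include one pair with both entries $\equiv 1\pmod 6$. I would handle it by a finite case analysis on the residues of $A$ and $B$ modulo $6$ (constrained as above by $N\equiv 1\pmod 6$): note that $A\pm 3B\equiv A\pm 3B\pmod 6$ depends only on $A\bmod 6$ and $B\bmod 2$, while $A\mp B\bmod 6$ depends on both $A,B\bmod 6$, and since negating a coordinate sends residue $c$ to $-c$, each odd coordinate can be independently adjusted to be $\equiv \pm 1\pmod 3$ and made $\equiv 1\pmod 6$ after checking it is odd and coprime to $3$. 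The cleanest packaging is to observe that both $A+3B$ and $A-B$ (and their sign variants) are automatically odd, that exactly the mod $3$ condition remains, and that the two admissible values $\pm 1\pmod 3$ of each are interchanged by the independent sign flips available, so a valid choice always exists. I would close by remarking that this completes both implications and hence the lemma.
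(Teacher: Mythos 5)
Your proposal is correct and follows essentially the same route as the paper: both directions rest on the identity $4(x^2+3y^2)=(x+3y)^2+3(x-y)^2$, with the forward direction using the freedom to negate a coordinate (equivalently, to swap between $(A+3B,\,A-B)$ and $(A-3B,\,A+B)$) to force both entries to be odd and nonzero mod $3$, hence $\equiv 1 \pmod 6$ after a final sign choice. Your backward direction, which divides by $4$ after flipping the sign of $B$ to arrange $A\equiv B\pmod 4$, is the same computation the paper carries out via explicit formulas for $a,b$ in the two cases $j\equiv k$ and $j\not\equiv k\pmod 2$.
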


 \begin{proof}
 Suppose $N \equiv 1 \pmod 6$ is representable by the form $A^2 + 3B^2$.
 Then there exist $a,b \in \Z$ such that 
 \begin{equation}\label{eq:N=ab}
 N = a^2 + 3b^2.
 \end{equation}
 Specifically, because $a^2=(-a)^2$ and $a\not \equiv0\pmod{3}$, if $a \equiv b \pmod 3$, let us instead choose $a$ to be $-a$ in \eqref{eq:N=ab} to obtain a representation where $a\not\equiv b \pmod{3}$.
 Now notice 
 \begin{align}
\label{eq:4N}
 (a+3b)^2 + 3(a-b)^2 = 4a^2 + 12b^2 &= 4N \nonumber \\
 (a+3b)^2 + 3(b-a)^2 = 4a^2 + 12b^2 &= 4N \nonumber \\
 (-a-3b)^2 + 3(a-b)^2 = 4a^2 + 12b^2 &= 4N, \text{\ and} \nonumber \\
 (-a-3b)^2 + 3(b-a)^2 = 4a^2 + 12b^2 &= 4N.
 \end{align}
 Since $a$ and $b$ must be of opposite parity by (\ref{eq:N=ab}), we have one representation of $4N$ in (\ref{eq:4N}) of the form $(6J+1)^2 + 3(6K+1)^2$.

Now instead suppose $N$ is any positive integer such that $4N = (6j+1)^2 + 3(6k+1)^2$.
If $j \equiv k \pmod 2$,
let $a = (3j+9k)/2 + 1$ and $b = (3j-3k)/2$.
Then 
\begin{equation}
    a^2 + 3b^2 = \frac{36j^2 + 12j + 1}{4} + 3\frac{36k^2 + 12k + 1}{4} 
       = \frac{(6j+1)^2}{4} + 3\frac{(6k+1)^2}{4} = N.
\end{equation}
Otherwise, if $j \not \equiv k \pmod 2$,
let $a = (3j-9k-1)/2$ and $b = (3j+3k+1)/2$.
Then 
\begin{equation}
    a^2 + 3b^2 = \frac{36j^2 + 12j + 1}{4} + 3\frac{36k^2 + 12k + 1}{4} 
       = \frac{(6j+1)^2}{4} + 3\frac{(6k+1)^2}{4} = N.
\end{equation}
In either case, we see that $N$ is of the form $A^2 + 3B^2$, and the lemma follows.

\end{proof}

We now provide an explicit set with arithmetic density one on which $\cp_{5,1,6}(n)$ is even.

\begin{theorem}\label{thm:cp516even}
When $a=5, b=1,$ and $m=6$,
$$\lim_{n\to \infty}\frac{\#\{1\le k\le n\mid\cp_{5,1,6}(k) \text{ is even}\}}{n} = 1.$$
In particular, $\cp_{5,1,6}(n)$ is even if  the prime factorization of $6n+1$ has a prime $\equiv 2 \pmod 3$ occurring with an odd exponent.
\end{theorem}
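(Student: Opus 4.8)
The plan is to follow the blueprint of the proof of Theorem \ref{thm:cp314even}: reduce $\mathbf{cp}_{5,1,6}(q)$ modulo $2$ to a product of two theta series, convert the resulting exponents into a binary quadratic form, and then invoke Lemma \ref{lem:formequivalence} together with the classical theory of the form $A^2 + 3B^2$.

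First I would simplify the generating function modulo $2$. Starting from \eqref{eq:cpgf}, I would write
$$ \sum_{n=0}^\infty \cp_{5,1,6}(n)q^n = \frac{(q^6;q^6)_\infty}{(q;q^6)_\infty(q^5;q^6)_\infty}, $$
and use the factorization of odd parts modulo $6$, namely $(q;q^2)_\infty = (q;q^6)_\infty(q^3;q^6)_\infty(q^5;q^6)_\infty$, to rewrite the denominator, obtaining $\dfrac{(q^6;q^6)_\infty(q^3;q^6)_\infty}{(q;q^2)_\infty}$. I would then clear the denominator using $(q;q^2)_\infty = (q;q)_\infty/(q^2;q^2)_\infty$ and apply the congruence $(q^2;q^2)_\infty \equiv (q;q)^2_\infty \pmod 2$ to cancel one factor of $(q;q)_\infty$. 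Finally, recombining $(q^3;q^6)_\infty(q^6;q^6)_\infty = (q^3;q^3)_\infty$ should yield
$$ \sum_{n=0}^\infty \cp_{5,1,6}(n)q^n \equiv (q;q)_\infty(q^3;q^3)_\infty \pmod 2. $$

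Next I would apply Euler's Pentagonal Number Theorem to each factor and work modulo $2$ to conclude that $\cp_{5,1,6}(n)$ is odd only if $n = j(3j-1)/2 + 3k(3k-1)/2$ for some integers $j,k$. Multiplying through by $24$ and completing the square converts this into $4(6n+1) = (6j-1)^2 + 3(6k-1)^2$; replacing $j,k$ by their negatives puts this in the shape $(6J+1)^2 + 3(6K+1)^2$, so that Lemma \ref{lem:formequivalence}, applied with $N = 6n+1 \equiv 1 \pmod 6$, shows $\cp_{5,1,6}(n)$ is odd only when $6n+1$ is representable by $A^2 + 3B^2$. The stated sufficient condition for evenness is then the contrapositive of the classical genus-theoretic criterion: an integer represented by $A^2+3B^2$ has every prime $\equiv 2 \pmod 3$ occurring to an even power, so if $6n+1$ has such a prime to an odd power it is not representable, forcing $\cp_{5,1,6}(n)$ to be even.

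The density statement then follows because the number of integers up to $X$ represented by a positive definite binary quadratic form of negative discriminant is $O(X/\sqrt{\log X})$ by a theorem of Landau, hence has density zero; therefore the set of $n$ with $6n+1$ representable has density zero and the limit equals $1$. The main obstacle I anticipate is the bookkeeping in the middle step: correctly distilling the clean product $(q;q)_\infty(q^3;q^3)_\infty$ modulo $2$ and then matching the pentagonal exponents to the precise form $(6J+1)^2 + 3(6K+1)^2$ so that Lemma \ref{lem:formequivalence} applies verbatim. Once that alignment is secured, the remainder is a direct transcription of the $\cp_{3,1,4}$ argument together with the citation to the classical theory of binary quadratic forms.
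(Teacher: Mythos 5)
Your proposal is correct and follows essentially the same route as the paper: reduce $\mathbf{cp}_{5,1,6}(q)$ to $(q;q)_\infty(q^3;q^3)_\infty \pmod 2$, convert the pentagonal-number exponents to the condition $4(6n+1)=(6J+1)^2+3(6K+1)^2$, and apply Lemma \ref{lem:formequivalence} together with the genus theory of $A^2+3B^2$ and the density-zero count of represented integers. The only differences are cosmetic (using $(q^2;q^2)_\infty\equiv(q;q)_\infty^2$ in place of $(-q;q)_\infty\equiv(q;q)_\infty$, the sign convention in the pentagonal numbers, and an explicit citation of Landau's $O(X/\sqrt{\log X})$ bound where the paper simply asserts density zero).
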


\begin{remark}
Note that $6n+1$ having a prime $\equiv 2 \pmod 3$ occurring with an odd exponent is a sufficient condition, but is not necessary. For example, $\cp_{5,1,6}(5)=2$ is even, even though $6(5)+1=31$.
\end{remark}

\begin{proof}
We can rewrite the generating function for $\cp_{5,1,6}$ in a very useful form.
Consider 
 \begin{align}
 \frac{(q^2;q^2)_\infty(q^3;q^3)_\infty}{(q;q)_\infty(q^6;q^6)_\infty} 
&=\frac{(q^2;q^2)_\infty(q^3;q^6)_\infty(q^6;q^6)_\infty}{(q;q^2)_\infty(q^2;q^2)_\infty(q^6;q^6)_\infty} \nonumber \\
&=\frac{(q^3;q^6)_\infty}{(q;q^2)_\infty} \nonumber \\
&=\frac{1}{(q;q^6)_\infty(q^5;q^6)_\infty}. \nonumber \\
\end{align}
Thus, 
 \begin{align}
 \sum_{n=0}^\infty \cp_{5,1,6}(n)q^n&=\frac{(q^{6};q^6)_\infty}{(q^5;q^6)_\infty(q;q^6)_\infty} =  \frac{(q^2;q^2)_\infty(q^3;q^3)_\infty}{(q;q)_\infty} \nonumber \\
  &=  (-q;q)_\infty(q^3;q^3)_\infty \equiv (q;q)_\infty(q^3;q^3)_\infty \pmod 2. 
\end{align}
Applying Euler's Pentagonal Number Theorem, we then have that
$$
\sum_{n=0}^\infty \cp_{5,1,6}(n)q^n \equiv 
 \left ( \sum_{j=-\infty}^\infty q^{j(3j+1)/2}  \right ) 
    \left ( \sum_{k=-\infty}^\infty q^{3k(3k+1)/2} \right )
\pmod 2.$$
Thus, $\cp_{5,1,6}(n)$ must be even unless 
$n = j(3j+1)/2 + 3k(3k+1)/2$ for some integers $j$ and $k$, or
equivalently, unless
$(6j+1)^2 + 3(6k+1)^2 = 24n + 4$.
By Lemma \ref{lem:formequivalence}, this occurs if and only if $6n+1$ is represented by the form $A^2 + 3B^2$.
From the classical theory of binary quadratic forms \cite[Chapter 1]{primes_of_form}, we know that the integers $6n+1$ representable by the form $A^2 + 3B^2$ are precisely those with prime factorizations having all powers of primes $\equiv 2 \pmod 3$ occurring with an even exponent.
Since that set of representable $(6n+1)$s has density zero, we have
$$\lim_{n\to \infty}\frac{\#\{1\le k\le n\mid\cp_{5,1,6}(k) \text{ is even}\}}{n} = 1$$ as desired.
\end{proof}

Considering the set of even values of $\cp_{5,1,6}$ guaranteed by Theorem \ref{thm:cp516even}, it is straightforward to write down arithmetic progressions on which $\cp_{5,1,6}$
is always even.

\begin{cor}\label{cor:cp5161}
For any prime $p>2, p\equiv 2 \pmod 3$, let $6 \delta \equiv 1 \pmod {p^2}$.
Then
$$
\cp_{5,1,6}(p^2k+pt-\delta) \equiv 0 \pmod 2
$$
for $t=1, 2, \dots, p-1$ and every nonnegative integer $k$.
\end{cor}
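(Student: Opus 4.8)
The statement to prove is Corollary \ref{cor:cp5161}, which converts the density-one even set from Theorem \ref{thm:cp516even} into explicit arithmetic progressions.

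\medskip

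The plan is to leverage the sufficient condition already established in Theorem \ref{thm:cp516even}: $\cp_{5,1,6}(n)$ is even whenever the prime factorization of $6n+1$ contains some prime $\equiv 2 \pmod 3$ to an odd power. So the entire task reduces to showing that for each $n$ of the form $p^2 k + pt - \delta$ with $t \in \{1,\dots,p-1\}$, the integer $6n+1$ is divisible by $p$ exactly once (i.e. $p \,\|\, 6n+1$), since $p \equiv 2 \pmod 3$ by hypothesis. First I would compute $6n+1$ modulo $p$ and modulo $p^2$. Using the defining congruence $6\delta \equiv 1 \pmod{p^2}$, we get
\begin{align*}
6n+1 &= 6(p^2 k + pt - \delta) + 1 = 6p^2 k + 6pt - 6\delta + 1 \\
&\equiv 6pt \pmod{p^2},
\end{align*}
since $6\delta \equiv 1 \pmod{p^2}$ kills the $-6\delta+1$ term modulo $p^2$. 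Thus $6n+1 \equiv 6pt \pmod{p^2}$.

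\medskip

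The key step is then to read off the exact power of $p$ dividing $6n+1$ from this residue. Since $p > 2$ and $p \equiv 2 \pmod 3$ means $p \neq 3$, neither $6$ nor $t$ is divisible by $p$ for $t \in \{1,\dots,p-1\}$; hence $6pt$ is divisible by $p$ but not by $p^2$. Because $6n+1 \equiv 6pt \pmod{p^2}$ and $6pt$ is a multiple of $p$ that is not a multiple of $p^2$, we conclude $6n + 1 \equiv 0 \pmod p$ but $6n+1 \not\equiv 0 \pmod{p^2}$, i.e. $p$ divides $6n+1$ to exactly the first power. Since $p \equiv 2 \pmod 3$, this is precisely the sufficient condition in Theorem \ref{thm:cp516even} guaranteeing that $\cp_{5,1,6}(n)$ is even. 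Applying this for every $t=1,\dots,p-1$ and every $k \geq 0$ yields the claimed family of congruences.

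\medskip

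I do not anticipate a serious obstacle here, as the corollary is essentially a bookkeeping consequence of Theorem \ref{thm:cp516even}; the only point requiring a little care is verifying that $\gcd(6t, p) = 1$, which rests on the two hypotheses $p > 2$ and $p \equiv 2 \pmod 3$ (ruling out $p \mid 6$) together with the range restriction $1 \le t \le p-1$ (ruling out $p \mid t$). One should also note that the existence of $\delta$ with $6\delta \equiv 1 \pmod{p^2}$ is guaranteed precisely because $\gcd(6,p^2)=1$ under these hypotheses, so $\delta$ is well-defined modulo $p^2$. This mirrors exactly the structure of Corollary \ref{cor:cp3141} for $\cp_{3,1,4}$, where the analogous computation $24n+5 \equiv 24pt \pmod{p^2}$ is used; the present case is the same argument with $6n+1$ in place of $24n+5$.
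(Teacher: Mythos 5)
Your proof is correct and follows exactly the route the paper intends: the authors state the corollary as a ``straightforward'' consequence of Theorem \ref{thm:cp516even} and omit the details, which you have supplied correctly by showing $6(p^2k+pt-\delta)+1 \equiv 6pt \pmod{p^2}$, so that $p$ divides $6n+1$ exactly once and the sufficient condition of the theorem applies. No gaps; the only cosmetic point is that for small $k$ the argument $p^2k+pt-\delta$ may be negative (as in the $p=5$ case), but that is an artifact of the corollary's statement, not of your argument.
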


Below we give two specific examples.

\begin{cor}\label{cor:cp5162}
For $r=9, 14, 19, 24$, we have 
$$ \cp_{5,1,6}(25k+r) \equiv 0 \pmod 2
$$
for every nonnegative integer $k$.
\end{cor}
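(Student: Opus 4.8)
The plan is to derive this directly from Corollary \ref{cor:cp5161} by specializing to the prime $p = 5$, which satisfies both $p > 2$ and $p \equiv 2 \pmod 3$. First I would solve the congruence $6\delta \equiv 1 \pmod{25}$; since $6 \cdot 21 = 126 = 125 + 1$, the solution is $\delta \equiv 21 \pmod{25}$. Substituting $p = 5$ and $\delta = 21$ into the progression $p^2 k + pt - \delta$ from Corollary \ref{cor:cp5161} gives $25k + 5t - 21$, and reducing $5t - 21 \pmod{25}$ for $t = 1, 2, 3, 4$ yields the residues $9, 14, 19, 24$ respectively. After reindexing $k$ to clear the negative shift (replacing $k$ by $k+1$ so the index stays nonnegative), each of these four progressions $25k + 5t - 21$ is identified with exactly one of the progressions $25k + r$, $r \in \{9, 14, 19, 24\}$, claimed in the statement, and the conclusion is immediate.

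Alternatively, I would give a fully self-contained argument straight from Theorem \ref{thm:cp516even}, which I expect to be the cleaner presentation. For $n = 25k + r$ with $r \in \{9, 14, 19, 24\}$, compute $6n + 1 = 150k + (6r + 1)$. A short check gives $6r + 1 \in \{55, 85, 115, 145\}$, each of which equals $5$ times a member of $\{11, 17, 23, 29\}$; since $150k$ is divisible by $25$ while $6r+1$ is divisible by $5$ but not by $25$, we get $6n + 1 \equiv 0 \pmod 5$ and $6n + 1 \not\equiv 0 \pmod{25}$ for every $k$. Hence the prime $5$, which is $\equiv 2 \pmod 3$, divides $6n + 1$ to exactly the first (and therefore odd) power, and Theorem \ref{thm:cp516even} yields $\cp_{5,1,6}(n) \equiv 0 \pmod 2$.

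There is no genuine obstacle here, as the corollary is a routine specialization of Corollary \ref{cor:cp5161}. The only point demanding a little care is confirming that $5$ occurs to an \emph{odd} power rather than merely dividing $6n+1$: one must verify that the quotient $(6n+1)/5 = 30k + (6r+1)/5$ is never itself divisible by $5$. This reduces to checking that each $(6r+1)/5 \in \{11, 17, 23, 29\}$ is coprime to $5$, which holds since these are $\equiv 1, 2, 3, 4 \pmod 5$; this guarantees the odd-exponent hypothesis of Theorem \ref{thm:cp516even} uniformly in $k$.
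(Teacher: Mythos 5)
Your proposal is correct and matches the paper's approach: the paper presents this corollary as the $p=5$ specialization of Corollary \ref{cor:cp5161} (with no separate proof), and your computation $\delta=21$, $5t-21\equiv 9,14,19,24\pmod{25}$ is exactly that specialization. Your self-contained second argument via Theorem \ref{thm:cp516even} is just the same reasoning unwound --- verifying that $5\equiv 2\pmod 3$ divides $6n+1$ exactly once --- and both are sound.
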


\begin{cor}\label{cor:cp5163}
For $r=9, 31, 42, 53, 64, 75, 86, 97, 108, 119$, we have 
$$ \cp_{5,1,6}(121k+r) \equiv 0 \pmod 2
$$
for every nonnegative integer $k$.
\end{cor}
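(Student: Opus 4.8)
The plan is to obtain this as the single special case $p=11$ of Corollary~\ref{cor:cp5161}. First I would verify that $11$ satisfies the hypotheses there: it is prime, $11 > 2$, and $11 \equiv 2 \pmod 3$. Next I would compute the value of $\delta$ required by that corollary, namely the residue with $6\delta \equiv 1 \pmod{121}$. Since $6 \cdot 101 = 606 = 5 \cdot 121 + 1$, the correct value is $\delta = 101$. Substituting $p = 11$ and $\delta = 101$, Corollary~\ref{cor:cp5161} immediately yields
$$\cp_{5,1,6}(121k + 11t - 101) \equiv 0 \pmod 2$$
for each $t = 1, 2, \dots, 10$ and every nonnegative integer $k$.

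It then remains to translate these ten progressions into the stated form $121k + r$. Reducing $11t - 101$ modulo $121$ into the range $\{0, 1, \dots, 120\}$ as $t$ runs from $1$ to $10$ gives the residues $31, 42, 53, 64, 75, 86, 97, 108, 119, 9$, which together form exactly $\{9, 31, 42, 53, 64, 75, 86, 97, 108, 119\}$. Because adjusting the argument of $\cp_{5,1,6}$ by a multiple of $121$ only reindexes the progression by shifting $k$, each progression $121k + (11t - 101)$ coincides with a progression $121k + r$ for one of the listed values of $r$, and the even-parity statement transfers without change.

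Since the corollary is nothing more than an instantiation of Corollary~\ref{cor:cp5161}, I do not expect any real obstacle; the only place demanding care is the elementary modular arithmetic, that is, inverting $6$ modulo $121$ correctly and rewriting the negative residues $11t - 101$ as their canonical nonnegative representatives. As an independent check one could bypass Corollary~\ref{cor:cp5161} entirely and argue directly from Theorem~\ref{thm:cp516even}: for each listed $r$ one has $6r + 1 = 11 c$ with $\gcd(c, 11) = 1$, and since $6(121k + r) + 1 = 11(6 \cdot 11 k + c)$ with the second factor congruent to $c \not\equiv 0 \pmod{11}$, the prime $11 \equiv 2 \pmod 3$ divides $6n+1$ to the first (hence odd) power for every $k$, forcing $\cp_{5,1,6}(n)$ to be even.
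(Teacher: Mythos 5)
Your proposal is correct and matches the paper's intent exactly: the corollary is stated as a specific instance of Corollary~\ref{cor:cp5161} with $p=11$, and your computation of $\delta=101$ and the reduction of $11t-101$ modulo $121$ to the listed residues $\{9,31,42,53,64,75,86,97,108,119\}$ is accurate. The independent verification via Theorem~\ref{thm:cp516even} (showing $11$ exactly divides $6n+1$ on each progression) is a nice sanity check but not needed.
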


\section{Conclusion}
Conjectures \ref{conj:evenmodda}, \ref{conj:oddm}, and \ref{conj:oddm5050} and Question \ref{quest:abne} remain open.
Although we treat the parity of $\cp_{a,a,m}(n)$ and $\cp_{a,m-a,m}(n)$
in some detail above, the parity of $\cp_{a,b,m}(n)$ when $a \ne b$, and $a+b\ne m$ is largely unexplored.
With only a small amount of evidence, we make the following somewhat bold conjecture.
\begin{conj} 
When $\gcd(a,b,m)=1$, $a \ne b$, and $a+b\ne m$, $\cp_{a,b,m}(n)$ is even (odd) with density $\frac{1}{2}$.
That is,
$$\lim_{n\to \infty}\frac{\#\{1\le k\le n\mid\cp_{a,b,m}(k) \text{ is even}\}}{n}=\frac{1}{2}.$$
\end{conj}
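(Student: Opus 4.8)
The plan is to adapt the mod-$2$ machinery of Theorems~\ref{thm:cp314even} and \ref{thm:cp516even} while confronting the feature that the excluded ranges $a=b$ and $a+b=m$ are exactly the ones on which the parity generating function collapses onto a density-zero set. First I would reduce $\mathbf{cp}_{a,b,m}(q)$ modulo $2$ to as simple an infinite product as possible, using $(x;q)^2_\infty \equiv (x^2;q^2)_\infty \pmod 2$ together with Euler's and Jacobi's identities. Unlike the $a+b=m$ cases, where the factors $(q^a;q^m)_\infty$ and $(q^{m-a};q^m)_\infty$ combine into a full product such as $(q;q^2)_\infty$ and thus into an ordinary eta-quotient, for $a+b\ne m$ the individual residue-class products do not combine; the correct uniform framework is therefore that of \emph{generalized} (Siegel) eta-quotients. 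Interpreting the right-hand side of \eqref{eq:cpgf} as a ratio of generalized eta products, one expects (after a substitution $q\mapsto q^{t}$ and multiplication by a rational power of $q$) to realize $\mathbf{cp}_{a,b,m}(q)$ as the $q$-expansion of a modular form of integral or half-integral weight on a congruence subgroup of level dividing a power of $m$.

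Second, I would bring in the theory of modular forms modulo $2$. The governing dichotomy, going back to Serre, is that such a form is either \emph{lacunary} mod $2$, with its odd coefficients supported on a set of density zero, or else its odd coefficients occupy a set of positive density; moreover lacunarity mod $2$ is highly constrained, essentially forcing the form to be a combination of theta series of definite binary quadratic forms or of the Euler product $(q;q)_\infty$. The solved non-$\tfrac12$ cases fit this picture exactly: Corollary~\ref{m=2a} and the $a+b=m$ analysis reduce to pentagonal-type series, and Theorems~\ref{thm:cp314even}--\ref{thm:cp516even} reduce to binary-form theta series, all lacunary mod $2$. The strategy would then be to show that the hypotheses $a\ne b$, $a+b\ne m$, $\gcd(a,b,m)=1$ force a weight and level incompatible with any such theta decomposition, so that the reduced form is non-lacunary, and finally to upgrade ``positive density of odd values'' to the exact value $\tfrac12$ via an equidistribution statement --- Chebotarev applied to the mod-$2$ Galois representation attached to the form --- that splits Frobenius classes evenly between the two parities.

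The hard part, and the reason this must remain a conjecture, is precisely that last upgrade. Passing from positive density to density exactly $\tfrac12$ is of the same depth as the corresponding statement for $p(n)$, which is famously open, and even the qualitative claim that a general modular form mod $2$ has a positive density of odd coefficients lies beyond current technology --- the strongest unconditional bounds, of the Bella\"iche--Green--Soundararajan type \cite{BellaicheGreenSound}, fall far short. I therefore expect the genuine obstacle to be analytic rather than algebraic: the reduction to a (generalized) eta-quotient and the verification of non-lacunarity should be attainable case by case, but the equidistribution input needed for the exact density is not. A realistic intermediate goal is to prove the qualitative statement that both parities occur infinitely often for every admissible $(a,b,m)$, extending the main theorem of this section beyond $a+b=m$, and to obtain density $\tfrac12$ only conditionally on the relevant equidistribution hypothesis.
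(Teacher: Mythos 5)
The statement you are addressing is labeled as a conjecture in the paper, and the authors explicitly list it among the problems that ``remain open''; there is no proof in the paper to compare your attempt against. Your proposal is, by your own admission, not a proof either --- it is a research program whose final and decisive step (upgrading positive density of odd values to density exactly $\tfrac12$) you correctly identify as being of the same depth as the parity problem for $p(n)$, which is open. So the honest verdict is that your write-up contains a genuine and unavoidable gap, but one that you have located accurately: even granting the reduction of $\mathbf{cp}_{a,b,m}(q)$ modulo $2$ to a generalized eta-quotient and a verification of non-lacunarity, no known technique produces the exact value $\tfrac12$, and the strongest unconditional results in the direction of such statements \cite{BellaicheGreenSound} fall far short. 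Your diagnosis of \emph{why} the excluded cases $a=b$ and $a+b=m$ are tractable --- the parity generating function collapses onto pentagonal-type or binary-theta-type series, hence lacunary mod $2$ --- matches the mechanism actually used in Corollary \ref{m=2a} and Theorems \ref{thm:cp314even} and \ref{thm:cp516even}, and it is the right way to see why the conjectured dichotomy is plausible.

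One technical caution if you pursue the intermediate goal you propose (both parities infinitely often for all admissible $(a,b,m)$): the paper's proof of the corresponding qualitative result in the case $a+b=m$ leans heavily on the fact that the numerator $(q^{a+b};q^m)_\infty$ becomes $(q^m;q^m)_\infty$, which combines with Jacobi's triple product applied to the denominator to yield the clean functional congruence \eqref{eq:cpgf4} with lacunary series on both sides. For general $a+b\not\equiv 0\pmod m$ the numerator is a genuinely incomplete product and no such pairing is available, so that argument does not transfer directly; you would need a different device to control the support of the odd coefficients. This is worth flagging because your sketch presents the extension as ``attainable case by case,'' which may be optimistic even for the qualitative statement.
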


In another direction, we give several congruences modulo 2 in Corollaries \ref{cor:cp3141}-\ref{cor:cp3143} and
\ref{cor:cp5161}-\ref{cor:cp5163},
In \cite{Andrews18}, Andrews gave a congruence modulo 5, namely 
$\mathcal{EO}^*(10n+8)\equiv 0\pmod{5}$, which, when written in the language of copartitions, becomes $\cp_{1,1,2}(5n+4)\equiv 0\pmod{5}$.
 Additionally, he defined an even-odd partition crank that witnesses this congruence. The equivalent copartition crank is the number of ground parts minus the number of sky parts, and the copartition crank witnesses the congruence $\cp_{1,1,2}(5n+4)\equiv 0\pmod{5}$. A combinatorial proof of these congruences would be a welcome addition to the literature.

\bibliographystyle{amsplain}
\bibliography{copartitions}

\appendix

\section{Data tables}\label{sec:datatables}

For readers interested in thinking about Conjecture \ref{conj:oddm5050} and  Question \ref{quest:abne}, we tabulate the proportion of even values of $\cp_{a,m-a,m}(k)$ for $k$ up to $32000$ for several values of $a$ and $m$.
In general, we find that the growth/behavior of the proportion seems roughly consistent when fixing $m$ and varying $a$.  For example, in Table \ref{tab:varya}, we see that the behavior in each column is roughly the same.
\begin{table}[h]
\begin{center}
    \begin{tabular}{>{$}c<{$}>{$}c<{$}>{$}c<{$}>{$}c<{$}}\toprule
     \quad n &    \frac{\#\{1\le k\le n\mid\cp_{1,11,14}(k) \text{ is even}\}}{n} & \frac{\#\{1\le k\le n\mid\cp_{3,11,14}(k) \text{ is even}\}}{n} & \frac{\#\{1\le k\le n\mid\cp_{5,9,14}(k) \text{ is even}\}}{n} \\
\midrule
   \quad   1000 & 0.535 & 0.543 & 0.530 \\
   \quad   2000 & 0.536 & 0.545 & 0.536 \\
   \quad   4000 & 0.549 & 0.552 & 0.543  \\
   \quad   8000 & 0.557 & 0.553 & 0.554 \\
   \quad   16000 & 0.568 & 0.564 & 0.565 \\
   \quad   32000 & 0.576 & 0.572 & 0.573 \\
\end{tabular}\caption{The proportion of even values of $\cp_{a,14-a,14}(k)$ for $1\le k\le n$ when $a$ is $1$, $3$, and $5$.}\label{tab:varya}
\end{center}
\end{table}
Since this is the case, in Table \ref{tab:varym} below, we just give $\displaystyle \frac{\#\{1\le k\le n\mid\cp_{1,m-1,m}(k) \text{ is even}\}}{n}$ for several $m$ between $3$ and $32$.

\begin{table}
\begin{center}
    \begin{tabular}{>{$}c<{$}>{$}c<{$}>{$}c<{$}>{$}c<{$}>{$}c<{$}>{$}c<{$}>{$}c<{$}>{$}c<{$}>{$}c<{$}>{$}c<{$}}\toprule
    n \backslash m & 3 & 4 & 5 & 6 & 7 & 8 & 9 & 10 & 12 \\
    \midrule
    1000 & 0.504 & 0.602 & 0.503 & 0.581 & 0.500 & 0.632 & 0.519 & 0.553 & 0.498\\
    2000 & 0.495 & 0.630 & 0.511 & 0.599 & 0.505 & 0.657 & 0.500 & 0.577 & 0.495 \\
    4000 & 0.495 & 0.656 & 0.509 & 0.623 & 0.500 & 0.681 & 0.505 & 0.593 & 0.499 \\
    8000 & 0.506 & 0.681 & 0.509 & 0.641 & 0.493 & 0.700 & 0.497 & 0.608 & 0.494 \\
    16000 & 0.503 & 0.701 & 0.508 & 0.653 & 0.496 & 0.719 & 0.497 & 0.625 & 0.499 \\
    32000 & 0.507 & 0.720 & 0.501 & 0.671 & 0.496 & 0.736 & 0.502 & 0.638 & 0.498 \\
    \midrule
    \end{tabular}
         \begin{tabular}{>{$}c<{$}>{$}c<{$}>{$}c<{$}>{$}c<{$}>{$}c<{$}>{$}c<{$}>{$}c<{$}>{$}c<{$}>{$}c<{$}>{$}c<{$}>{$}c<{$}}\toprule
    n \backslash m  & 14 & 16 & 18 & 20 & 22 & 24 & 26 & 28 & 30 & 32 \\
    \midrule
    1000  & 0.535 & 0.480 & 0.543 & 0.523 & 0.540 & 0.489 & 0.465 & 0.490 & 0.484 & 0.488\\
    2000 & 0.536 & 0.488 & 0.550 & 0.502 & 0.544 & 0.508 & 0.507 & 0.498 & 0.495 & 0.501 \\
    4000 & 0.549 & 0.497 & 0.547 & 0.499 & 0.550 & 0.503 & 0.517 & 0.502 & 0.503 & 0.503 \\
    8000 & 0.557 & 0.501 & 0.551 & 0.494 & 0.566 & 0.496 & 0.513 & 0.502 & 0.507 & 0.504 \\
    16000 & 0.568 & 0.504 & 0.554 & 0.496 & 0.566 & 0.499 & 0.509 & 0.501 & 0.509 & 0.502 \\
    32000 & 0.576 & 0.504 & 0.556 & 0.498 & 0.575 & 0.501 & 0.505 & 0.500 & 0.506 & 0.500 \\
         \midrule
    \end{tabular}
\end{center}
    \caption{The proportion of even values of $\cp_{1,m-1,m}(k)$ for $1\le k\le n$.}\label{tab:varym}
   \end{table}


\end{document}